\numberwithin{equation}{section}
\theoremstyle{plain}
\newtheorem{thm}{Theorem}[section]
\newtheorem{lemma}[thm]{Lemma}
\newtheorem{prop}[thm]{Proposition}
\theoremstyle{remark}
\newtheorem{rem}[thm]{Remark}
\newtheorem{ex}[thm]{Example}
\theoremstyle{definition}
\newtheorem{defi}[thm]{Definition}
\def\ot{\otimes}
\def\boxot{\boxtimes}
\def\fA{A}
\def\fV{\mathcal{V}}
\def\mm{\mathfrak{m}}
\def\SG{S}
\def\NN{\mathbb{N}}
\DeclareMathOperator{\brk}{\underline{rk}}
\DeclareMathOperator*{\bigbot}{\scalerel*{\boxtimes}{\sum}}
\subjclass[2020]{primary: 15A69, 14Q20} 
\newcommand\ignore[1]{}
\newcommand\CC{{\mathbb{C}}}
\newcommand\QQ{{\mathbb{Q}}}
\newcommand\ZZ{{\mathbb{Z}}}
\def\operatorname#1{\mathop{\rm #1}\nolimits}
\def\Spec{\operatorname{Spec}}
\def\rk{\operatorname{rk}}
\def\supp{\operatorname{supp}}
\def\deg{\operatorname{deg}}
\def\End{\operatorname{End}}
\newcommand{\pb}{\ar@{}[dr]|{\text{\pigpenfont J}}}
\newcommand{\xleftrightarrow}[2][]{\ext@arrow 3359\leftrightarrowfill@{#1}{#2}}
\newcommand{\xdasharrow}[2][->]{
\tikz[baseline=-\the\dimexpr\fontdimen22\textfont2\relax]{
\node[anchor=south,font=\scriptsize, inner ysep=1.5pt,outer xsep=2.2pt](x){#2};
\draw[shorten <=3.4pt,shorten >=3.4pt,dashed,#1](x.south west)--(x.south east);
}}
\begin{document}
\title[Bounds on complexity of matrix multiplication away from CW tensors]{Bounds on complexity of matrix multiplication away from Coppersmith--Winograd tensors}

\author[Homs]{Roser Homs}
\address{Technical University of Munich, Department of Mathematics, Parkring 13, 85748 Garching bei M\"unchen, Germany}
\email{roser.homs@tum.de}
\author[Jelisiejew]{Joachim Jelisiejew}
\address{Faculty of Mathematics, Informatics and Mechanics, University of Warsaw, Banacha 2, 02-097 Warsaw}
\thanks{JJ was partially supported by Polish National Science Center, project
2017/26/D/ST1/00755 and by the START
fellowship of the Foundation for Polish Science.}
\email{jjelisiejew@mimuw.edu.pl}

\author[Micha{\l}ek]{Mateusz Micha{\l}ek}
\address{
	University of Konstanz, Germany, Fachbereich Mathematik und Statistik, Fach D 197
	D-78457 Konstanz, Germany
}
\thanks{MM is funded by the Deutsche Forschungsgemeinschaft –- Projektnummer 467575307.}
\email{mateusz.michalek@uni-konstanz.de}
\author[Seynnaeve]{Tim Seynnaeve}
\address{University of Bern, Mathematical Institute, Sidlerstrasse 5, 3012 Bern, Switzerland}
\email{tim.seynnaeve@math.unibe.ch}

\begin{abstract}
    We present three families of minimal border rank tensors: they come from
    highest weight vectors, smoothable algebras, and monomial algebras. We
    analyse them using Strassen's laser method and obtain an upper bound
    $2.431$ on $\omega$. We also explain how in certain monomial cases using
    the laser method directly is less profitable than first degenerating. Our
    results form possible paths in the search for valuable tensors for the
    laser method away from Coppersmith-Winograd tensors.
\end{abstract}

\maketitle

\section{Introduction}

 Determining the complexity of matrix multiplication is a central problem in computer science. Its algebraic counterpart translates to estimating the rank or border rank of the matrix multiplication tensor $M_{\langle n,n,n \rangle}\in\CC^{n^2}\ot\CC^{n^2}\ot\CC^{n^2}$; see \cite{BurgisserBook, JM1, landsberg_2017} or \cite[Chapter 9.3]{jaBernd}.
The complexity of matrix multiplication is measured by the constant $\omega$, defined as the smallest number such that for any $\epsilon >0$ the multiplication of $n\times n$ matrices can be performed in 
$O(n^{\omega+\epsilon})$ arithmetic operations. Equivalently, $\omega$ is the smallest number such that for any $\epsilon>0$ the rank (or border rank) of $M_{\langle n,n,n \rangle}$ is $O(n^{\omega+\epsilon})$.

The best known upper bounds on $\omega$ are all obtained using the so-called \emph{laser method}, which is based on the work of Strassen \cite{Strassen87}.
The idea behind the laser method is to, instead of studying the matrix multiplication tensor directly, consider a different tensor which can be proven to have low border rank, and at the same time is ``close" to being a matrix multiplication tensor. 
Strassen obtained a bound $\omega < 2.48$ using the laser method. Shortly thereafter, Coppersmith and Winograd introduced a new tensor, and applied (in a highly nontrivial way) the laser method to it to obtain $\omega < 2.3755$ \cite{CW}. Since then, the improvements on the bound of $\omega$ were made by Stothers, Williams, and Le Gall \cite{Stothers, Virgi, LeGall}, arriving at the current state of the art $\omega<2.373$. These improvements were all obtained by applying the laser method to the Coppersmith-Winograd tensor and its powers. However, several recent results~\cite{Alman_Williams, Ambainis_Filmus_LeGall, Christandl_Vrana_Zuiddam2} proved the existence of barriers: as a very particular case, using the Coppersmith-Winograd tensors alone in the laser method, one cannot obtain a upper bound for $\omega$ close to $2$.
 
Below we present other tensors appearing naturally and apply the laser method to find the bound on $\omega$ they give. We explore two approaches of constructing such tensors. One approach is based on the highest weight vectors in $S^3(\mathfrak{sl}_n)$. As we show, a few of them are the Coppersmith-Winograd tensors! We use the other highest weight vectors to prove bounds on $\omega$. For one of those tensors we obtain a bound $\omega < 2.45$, which is not as good as the Coppersmith-Winograd bound, but better than Strassen's bound.

Another approach builds upon the works \cite{LaMM} and \cite{Blaser}. Under certain genericity assumptions, a tensor is of minimal border rank if and only if it is the multiplication tensor of a smoothable finite-dimensional algebra. The Coppersmith-Winograd tensor arises in this way: it is the multiplication tensor of an algebra with Hilbert function $(1,n,1)$, and it follows from a result of Cartwright et al.\ \cite{CEVV09} that such an algebra is always smoothable. We give an example of an algebra with Hilbert function $(1,n,2)$ (hence smoothable by \cite{CEVV09}) whose multiplication tensor is suitable for the laser method. The obtained bound $\omega < 2.431$ is better than the bound above, but still not as good as the bound obtained from the Coppersmith-Winograd tensor.
We hope that our constructions will inspire other mathematicians to look for ``valuable" tensors different from the Coppersmith-Winograd tensor.

Finally, we look at a very simple algebra $\CC[x]/(x^2)$. The associated tensor may be regarded as a very degenerate Coppersmith-Winograd tensor (in $\CC^2\otimes \CC^2\otimes\CC^2$). By taking the third Kronecker power of that tensor, corresponding to the third tensor product of algebras, we obtain a new tensor $T$, for which we apply the laser method obtaining $\omega < 2.56$. Here however our main observation is that $T$ degenerates to the Coppersmith-Winograd tensor, which is used to obtain 
all the upper bounds on $\omega$ since 1988.
This means that directly applying the 
laser method 
to $T$ is very far from optimal. In particular, it suggests that a new method, suited to analyze tensors like $T$, is needed, and if found will provide new bounds on $\omega$.

\section{Preliminaries}
We start with presenting preliminaries about tensors and fast matrix
multiplication. We work in the space $\fV=U\ot V \ot W$, where $U,V,W$ are
finite-dimensional vector spaces over $\CC$. We assume familiarity with the
notions of rank and border rank of tensors (denoted by $\rk(T)$ and $\brk(T)$,
respectively) and (border) Waring rank of symmetric tensors, see \cite[\S2.4,~\S2.6]{JM1} for an in-depth treatment.
 
Let $G:=GL(U) \times GL(V) \times GL(W)$. There is a natural action of $G$ on $\fV$, which extends to an action of the algebra $\fA:=\End(U) \times \End(V) \times \End(W) \supseteq G$.
\begin{defi}
	Let $T,T' \in \fV$.
	\begin{itemize}
		\item We say $T'$ is a \emph{restriction} of $T$, denoted $T' \leq T$, if $T' \in \fA\cdot T$. 
		\item We say $T'$ is a \emph{degeneration} of $T$, denoted $T' \trianglelefteq T$, if $T' \in \overline{G \cdot T}$. 
	\end{itemize}
\end{defi}
Informally, $T' \leq T$ means that $T'$ can be obtained from $T$ by
restricting to subspaces of $U,V,W$ and doing a change of basis. Degeneration
is an approximate version of restriction: $T' \trianglelefteq T$ means that
$T'$ is a limit of restrictions of $T$, see for example~\cite[\S2.2]{Blaser}.

\begin{defi}
	The \emph{Kronecker product} $T \boxot T'$ of two tensors $T \in U \ot V \ot W$ and $T' \in U' \ot V' \ot W'$ is by definition their tensor product $T \ot T'$, viewed as a $3$-way tensor in $(U \ot U') \ot (V \ot V') \ot (W \ot W')$. The bracketing is important: the rank one tensors in this space are all tensors of the form $u'' \ot v'' \ot w''$, with $u'' \in U \ot U'$, $v'' \in V \ot V'$, $w'' \in W \ot W'$. 
\end{defi}
A tensor $T\in \CC^a\otimes\CC^b\otimes\CC^c$ may be identified with a linear map $\CC^a\rightarrow \CC^b\otimes\CC^c$, after identification of vector spaces with their duals when required.
The image of that map, which may be represented as a linear space of $b\times c$ matrices, determines $T$ up to isomorphism. Hence, we will often represent $T$ as a space $L_T$ of matrices, as in (\ref{eq:matrix}). 
The tensor $T$ is called \emph{concise} if the corresponding linear maps $\CC^a\rightarrow \CC^b\otimes\CC^c$, $\CC^b\rightarrow \CC^a\otimes\CC^c$, $\CC^c\rightarrow \CC^a\otimes\CC^b$ are injective. Every tensor can be made concise by replacing $\CC^a$, $\CC^b$, $\CC^c$ with suitable subspaces. The border rank of a concise tensor is at least $\max \{a,b,c\}$; in case of equality we say that $T$ has \emph{minimal border rank}.

We denote the $a \times b \times c$ \emph{matrix multiplication tensor} by $M_{\langle a,b,c \rangle} \in \CC^{ab} \ot \CC^{bc} \ot \CC^{ca}$, see \cite[Section 2.5.2]{JM1} and \cite[Chapter 9.3]{jaBernd} for details. To prove bounds on $\omega$, instead of analyzing $M_{\langle a,b,c \rangle}$ directly, we will consider different tensors that are both of low (typically minimal) border rank, and degenerate to a direct sum of many large matrix multiplication tensors. This second requirement is made quantitative using the notion of degeneracy value.

\begin{defi}[{\cite[Definition 3.4.7.1]{landsberg_2017}}]
		Let 
		$T \in U \ot V \ot W$ 
		be a tensor. For all $N \in \mathbb{N}$, we define 
		\[
		V_{\omega,N}(T) = \sup \left\{\sum_{i=1}^{q}{(a_ib_ic_i)^{\omega/3}} \middle| T^{\boxot N} \trianglerighteq \bigoplus_{i=1}^{q}M_{\langle a_i,b_i,c_i \rangle}\right\},
		\]
		where we take the supremum over all possible ways of degenerating $T^{\boxot N}$ to a direct sum of matrix multiplication tensors. 
		The \emph{degeneracy value} (or simply \emph{value}) $V_{\omega}(T)$ of $T$ is defined as the supremum $\sup_N{V_{\omega,N}(T)^{\frac{1}{N}}}$.
\end{defi}
\begin{rem}
	Here are some easy but important properties of the degeneracy value. The first two are trivial; for the third one, see \cite[Section 3.4.7]{landsberg_2017}.
	\begin{itemize}
		\item If $T \trianglerighteq T'$, then $V_{\omega}(T) \geq V_{\omega}(T')$.
		\item Supermultiplicativity: $V_{\omega}(T \boxot T') \geq V_{\omega}(T)V_{\omega}(T')$.
		\item Superadditivity: $V_{\omega}(T \oplus T') \geq V_{\omega}(T)+V_{\omega}(T')$.
	\end{itemize}
\end{rem}
The following theorem is a restatement of the results by Strassen and Sch\"onhage's asymptotic sum inequality ({see \cite{Schoenhage81} or \cite[(15.11)]{BurgisserBook}}).

\begin{thm} \label{thm:asymsum}
	For any tensor $T$, we have $V_{\omega}(T) \leq \brk(T)$.
\end{thm}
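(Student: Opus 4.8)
The plan is to reduce the statement to the classical form of Sch\"onhage's asymptotic sum inequality and then to quote the latter. Unwinding the definition of the degeneracy value, $V_\omega(T) = \sup_N V_{\omega,N}(T)^{1/N}$, so it suffices to prove
\[
V_{\omega,N}(T) \le \brk(T)^N \qquad \text{for every } N \in \NN ,
\]
because taking $N$-th roots and then the supremum over $N$ yields $V_\omega(T) \le \brk(T)$. (This also shows a posteriori that the supremum defining $V_{\omega,N}(T)$ is finite, which is not obvious a priori.) So I would fix $N$ and an arbitrary degeneration $T^{\boxot N} \trianglerighteq \bigoplus_{i=1}^{q} M_{\langle a_i,b_i,c_i\rangle}$ contributing to the supremum, and aim to bound the corresponding quantity $\sum_{i=1}^q (a_ib_ic_i)^{\omega/3}$ by $\brk(T)^N$.

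The core of the argument is the chain of inequalities
\[
\sum_{i=1}^q (a_ib_ic_i)^{\omega/3} \ \le\ \brk\!\Big(\bigoplus_{i=1}^q M_{\langle a_i,b_i,c_i\rangle}\Big) \ \le\ \brk\big(T^{\boxot N}\big) \ \le\ \brk(T)^N .
\]
The rightmost inequality is submultiplicativity of border rank under the Kronecker product: a length-$r$ border rank expression for $T$ produces a length-$r^N$ one for $T^{\boxot N}$. The middle inequality is monotonicity of border rank under degeneration: the $r$-th secant variety $\{S : \brk(S) \le r\}$ is Zariski closed and $G$-stable, hence contains the orbit closure $\overline{G\cdot T^{\boxot N}}$ for $r := \brk(T^{\boxot N})$, and in particular contains the direct sum above. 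The leftmost inequality is exactly Sch\"onhage's asymptotic sum inequality (\cite{Schoenhage81}; \cite[(15.11)]{BurgisserBook}) applied to the tensor $\bigoplus_{i=1}^q M_{\langle a_i,b_i,c_i\rangle}$. Taking the supremum over all such degenerations then gives $V_{\omega,N}(T) \le \brk(T)^N$, which is what we wanted.

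The only genuinely nontrivial input is the leftmost inequality, i.e.\ Sch\"onhage's theorem itself, and this is where I expect the main difficulty to sit. Its standard proof is inherently asymptotic: one passes to a high Kronecker power, uses $M_{\langle a,b,c\rangle}\boxot M_{\langle a',b',c'\rangle} = M_{\langle aa',bb',cc'\rangle}$ to write $\big(\bigoplus_{i} M_{\langle a_i,b_i,c_i\rangle}\big)^{\boxot m} = \bigoplus_{\mathbf e} \binom{m}{\mathbf e}\, M_{\langle \prod a_i^{e_i},\, \prod b_i^{e_i},\, \prod c_i^{e_i}\rangle}$ over compositions $\mathbf e=(e_1,\dots,e_q)$ of $m$, extracts a single dominant summand, compares the border rank of many copies of one almost-cubic matrix multiplication tensor with the bound $\brk(M_{\langle n,n,n\rangle}) = O(n^{\omega+\epsilon})$ coming from the definition of $\omega$, and lets $m\to\infty$, $\epsilon\to 0$; the step reducing general $M_{\langle a,b,c\rangle}$ to cubic tensors (symmetrization) is the delicate point and is precisely why powers cannot be avoided. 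For the purposes of this paper it is enough to invoke this statement as a black box, so in the write-up I would simply cite it and present only the reduction above.
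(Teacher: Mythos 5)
Your proposal is correct and matches the paper's approach: the paper gives no separate argument but presents the theorem as a restatement of Sch\"onhage's asymptotic sum inequality \cite{Schoenhage81}, \cite[(15.11)]{BurgisserBook}, which is exactly the black box you invoke. The only difference is that you spell out the routine reduction (monotonicity of $\brk$ under degeneration and submultiplicativity under Kronecker powers, giving $V_{\omega,N}(T)\le \brk(T)^N$), which the paper leaves implicit.
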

\begin{proof}
For any $N$ consider any degeneration $T^{\boxot N} \trianglerighteq
\bigoplus_{i=1}^{q}M_{\langle a_i,b_i,c_i \rangle}$. It induces
the following inequalities on ranks:
\[\brk\left(\bigoplus_{i=1}^{q}M_{\langle a_i,b_i,c_i \rangle}\right)\leq \brk(T^{\boxot N})\leq \brk(T)^N.\]
By Sch\"onhage's asymptotic sum inequality we obtain:
\[\sum_{i=1}^{q}{(a_ib_ic_i)^{\omega/3}}\leq\brk(T)^N.\]
By considering the supremum over all such degenerations of $T^{\boxot N}$ we get $V_{\omega,N}(T)\leq \brk(T)^N$ for every $N$. Taking the $N$-th root and passing to the supremum over all $N$ we obtain the result.
\end{proof}
Therefore, any given tensor $T$ with an upper bound on its border rank and a lower bound on its value --- in terms of the constant $\omega$ --- yields an upper bound on $\omega$, as illustrated in \Cref{ex:strassen}.
In the rest of this section we briefly recall Strassen's \emph{laser method}, which can be used to estimate the value of a tensor if the tensor can be decomposed into smaller tensors for which the value is known. The version we present here was first introduced in \cite{thesisTim}.


\begin{defi}
	Let $T \in U \ot V \ot W$ be a tensor. 
	\begin{itemize}
		\item A \emph{blocking} $D$ of $T$ is given by decompositions 
		$U= \bigoplus_{i \in I}{U_{i}}$, $V= \bigoplus_{j \in J}{V_{j}}$, $W= \bigoplus_{k \in K}{W_{k}}$.
		These induce a decomposition 
		\[
		T=	\sum_{(i,j,k) \in I \times J \times K}{T_{(i,j,k)}}.
		\]
		\item The \emph{support} of a blocking $D$, denoted $\supp_DT$, consists of all triples $(i,j,k) \in I \times J \times K$ for which $T_{(i,j,k)} \neq 0$.
		\item We say $\supp_DT$ is \emph{tight}, if there are injections $\alpha: I \to \mathbb{Z}^r$,  $\beta: J \to \mathbb{Z}^r$,  $\gamma: K \to \mathbb{Z}^r$ s.t.\ $\alpha(i)+\beta(j)+\gamma(k)=0$ for all $(i,j,k) \in \supp_DT$. 
		\item The \emph{symmetrization} of $T$, denoted by $\tilde{T}$, is the Kronecker product $T \boxot T' \boxot T'' \in (U \ot V \ot W)^{\ot 3}$, where $T' \in V \ot W \ot U$ and $T'' \in W \ot U \ot V$ are obtained from $T$ by cyclically permuting the indices. 
	\end{itemize}	
\end{defi}
It is not hard to see that $\brk(\tilde{T})\leq \brk(T)^3$ and $V_\omega(\tilde{T}) \geq V_{\omega}(T)^3$.
As opposed to \cite[Theorem 4.1]{LeGall}, instead of assuming lower bounds on the values of the blocks, we will only assume known bounds on the values of their symmetrizations.

\begin{rem}
	We apply the laser method to tensors $T\in \CC^m\otimes\CC^m\otimes\CC^m$ of minimal border rank, where the blocking $D$ is given by partitioning the basis elements of each $\CC^m$ into three groups labeled by $\{0,1,2\}$. We assume that the tensor has nonzero entries only if the labels belong to the groups with indices summing up to $2$, i.e. if $(i,j,k) \in \supp_DT$ then $i+j+k=2$. For instance, under our assumptions $T_{(1,1,0)}$ is typically nonzero, while $T_{(1,0,0)}=0$. Such a blocking is automatically tight: we can take $r=1$ and define $\alpha(i)=i$, $\beta(j)=j$, $\gamma(k)=2-k$. 
\end{rem}

Let $D$ be a blocking of a tensor $T\in U \ot V \ot W$, with indexing sets $I,J,K$. We consider probability distributions $P: \supp_DT \to [0,1]$ on $\supp_DT$ (this gives each block a ``weight"). We write $P_I: I \to [0,1]$ for the marginal distribution on $I$, and similarly for $J$, $K$. The entropy $H(P_I) := -\sum_{i \in I}{P(i) \log P(i)}$ (and similarly for $J$, $K$) plays an important role in the laser method, see Lemma~\ref{lem:entropy}.
We make a technical (but easy to verify) assumption on our blocking which removes the need for the extra term $-\Gamma_S(P)$ appearing in \cite[Theorem 4.1]{LeGall}:

\begin{defi} \label{def:reconstructible}
	We say that a subset $\Phi \subseteq I \times J \times K$ is \emph{reconstructible}, if every probability distribution on $\Phi$ is uniquely determined by its 3 marginal distributions.
\end{defi}
\begin{ex}
	Let $I=J=K=\{0,1,2\}$, and take 
	\[
	\Phi=\{(2,0,0),(0,2,0),(0,0,2),(1,1,0),(1,0,1),(0,1,1)\}.
	\] 
	Then $\Phi$ is tight and reconstructible: if $P$ is a probability distribution on $\Phi$, then the equalities $P(2,0,0)=P_I(2)$, $P(0,1,1)=P_I(0)-P_J(2)-P_K(2), \ldots$  allow us to reconstruct $P$ from the marginal distributions $P_I,P_J,P_K$.
	However, if we consider 
	\[
	\Phi'=\{(2,1,0),(1,2,0),(2,0,1),(1,0,2),(0,2,1),(0,1,2)\},
	\] 
	then $\Phi'$ is again tight, but not reconstructible. For instance, the uniform distribution $P(x,y,z)=\frac{1}{6}$ and the distrubution $P(0,1,2)=P(2,0,1)=P(1,2,0)=\frac{1}{3}$ have the same marginals.
\end{ex}
\begin{thm}[Laser method] \label{thm:laserMethod}
	Let $T \in U \otimes V \otimes W$, and let $D$ be a blocking of $T$, indexed by $I \times J \times K$. Assume that $\supp_DT$ is tight and reconstructible. 
	Let $P$ be any probability distribution on $\supp_DT$.
	We have an inequality
	\begin{equation} \label{eq:laser}
	\log{V_{\omega}(\tilde{T})} \geq H(P_I)+H(P_J)+H(P_K)+\sum_{\supp_DT}{P(i,j,k)\log{V_\omega(\widetilde{T_{(i,j,k)}})}}.
	\end{equation}
\end{thm}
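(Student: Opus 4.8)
I would run Strassen's laser method in the degeneracy-value form of \cite{thesisTim} (a variant of \cite[Theorem~4.1]{LeGall}, based on \cite{Strassen87,CW}), applied directly to the symmetrisation $\tilde T$ with the blocking induced by $D$. The skeleton is the usual one: pass to a large Kronecker power, isolate the ``typical'' blocks, extract from them --- via a Salem--Spencer set built from the data witnessing tightness --- an honest direct sum of many mutually independent copies of a single typical block, and bound the value of that direct sum from below using super-additivity and super-multiplicativity of $V_\omega$. In this blueprint the tightness of $\supp_D T$ is exactly what powers the Salem--Spencer extraction, reconstructibility of $\supp_D T$ is exactly what forces the correction term $-\Gamma_S(P)$ of \cite[Theorem~4.1]{LeGall} to vanish, and passing to $\tilde T$ is what makes all three marginal entropies $H(P_I),H(P_J),H(P_K)$ appear (with coefficient $1$) while letting the \emph{symmetrised} block values $V_\omega(\widetilde{T_{(i,j,k)}})$ enter.

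\textbf{Typical blocks.} Set $\Phi:=\supp_D T$ and fix a large $N$ with $P(\phi)N\in\NN$ for all $\phi\in\Phi$. Since $\tilde T=T\boxtimes T'\boxtimes T''$, the first tensor slot of $\tilde T$ is $U\otimes V\otimes W$, with $D$ inducing on it the decomposition indexed by $I\times J\times K$ whose $I$-factor comes from the copy $T$, its $J$-factor from $T'$, and its $K$-factor from $T''$ (cyclically for the other two slots). Hence the blocking of $\tilde T^{\boxtimes N}$ has slots indexed by $(I\times J\times K)^N$, and a nonzero block corresponds to a triple of words in which each of the three copies has all of its columns in $\Phi$; one checks from $\alpha,\beta,\gamma$ that this blocking is again tight. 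Call such a block \emph{$P$-typical} when each of the three copies has empirical column-distribution $P$; reordering tensor factors shows that a $P$-typical block is isomorphic to
\[
B_P \;:=\; \bigbot_{\phi\in\Phi}\widetilde{T_{\phi}}^{\boxtimes P(\phi)N},
\]
and iterated super-multiplicativity of $V_\omega$ gives $V_\omega(B_P)\ge\prod_{\phi\in\Phi}V_\omega(\widetilde{T_{\phi}})^{P(\phi)N}$, whose logarithm is $N$ times the last summand of~\eqref{eq:laser}.

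\textbf{Entropy count, extraction, conclusion.} The slot-$1$ word underlying a $P$-typical block is the interleaving of a length-$N$ word over $I$ of composition $P_I$, one over $J$ of composition $P_J$, and one over $K$ of composition $P_K$ (contributed by the three copies); here reconstructibility of $\Phi$ is what guarantees that prescribing these three copywise marginal compositions forces each copy's full composition to be $P$. By Stirling the number of such slot-$1$ words is $2^{N(H(P_I)+H(P_J)+H(P_K))+o(N)}$, and likewise for slots $2$ and $3$. Feeding $\alpha,\beta,\gamma$ together with a subset of $\ZZ/M\ZZ$ ($M=2^{o(N)}$) free of three-term arithmetic progressions --- which by the Salem--Spencer/Behrend bound has size $2^{-o(N)}M$ --- into the standard zeroing-out recipe of \cite{thesisTim,LeGall} produces a restriction, hence a degeneration, of $\tilde T^{\boxtimes N}$ that is a genuine direct sum $\tilde T^{\boxtimes N}\trianglerighteq\bigoplus_{s=1}^{C_N}B_P$ of mutually slot-disjoint copies of $B_P$, with $\tfrac1N\log C_N\to H(P_I)+H(P_J)+H(P_K)$ as $N\to\infty$. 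Super-additivity then gives $V_\omega(\tilde T^{\boxtimes N})\ge C_N\cdot V_\omega(B_P)\ge C_N\prod_{\phi}V_\omega(\widetilde{T_{\phi}})^{P(\phi)N}$, while $V_\omega(\tilde T^{\boxtimes N})\le V_\omega(\tilde T)^N$ is immediate from the definition of $V_\omega$; taking logarithms, dividing by $N$ and letting $N\to\infty$ along admissible values absorbs the $o(N)$ terms and yields~\eqref{eq:laser}.

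\textbf{Main obstacle.} The substantive part is the Salem--Spencer extraction with its entropy accounting: one has to prove that the prescribed zeroing-out kills \emph{every} ``off-diagonal'' block while still retaining $2^{N(H(P_I)+H(P_J)+H(P_K))+o(N)}$ of the typical ones, and one has to verify carefully that reconstructibility of $\supp_D T$ is precisely the hypothesis under which no $\Gamma_S$-correction survives. The remaining ingredients --- the identification of a typical block with $B_P$, the super-multiplicativity estimate for $V_\omega(B_P)$, and the final elementary manipulation --- are routine.
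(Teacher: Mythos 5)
Your proposal is correct and follows essentially the same route as the paper's appendix proof: pass to the $N$-th Kronecker power of the symmetrisation, use reconstructibility to force every block with typical marginals to be isomorphic to $B_P=\bigbot_{\phi\in\supp_DT}\widetilde{T_{\phi}}^{\boxot P(\phi)N}$ (exactly how the $\Gamma_S$-correction is avoided), bound $V_\omega(B_P)$ by supermultiplicativity, extract a large diagonal of such blocks using tightness, and conclude with superadditivity and Stirling. The only difference is packaging: where you sketch the Salem--Spencer/zeroing-out extraction by hand, the paper invokes it as a black box, namely Strassen's theorem on tight sets \cite[(15.39)]{BurgisserBook} together with \cite[(15.30)]{BurgisserBook} to turn the combinatorial degeneration into a tensor degeneration, so the underlying machinery is the same.
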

\begin{proof}
	See \Cref{sec:appendix}.
\end{proof}

\begin{ex}[Strassen's tensor] \label{ex:strassen}
	Consider the following tensor:
	\[
	T=T_{STR,n}:=\sum_{i=1}^{n}{u_0 \ot v_i \ot w_i} + \sum_{i=1}^{n}{u_i \ot v_0 \ot w_i} \in U \ot V \ot W,
	\]
	where $U$, $V$ and $W$ have respective bases $\{u_0,u_1,\ldots,u_n\}$, $\{v_0,v_1,\ldots,v_n\}$, and $\{w_1,\ldots,w_n\}$. We consider the block decomposition $D$ with $I=J=\{0,1\}$, $K=\{1\}$, $U_0=\langle u_0 \rangle$, $U_1=\langle u_1, \ldots u_n \rangle$, $V_0=\langle v_0 \rangle$, $V_1=\langle v_1, \ldots v_n \rangle$, $W_1=\langle w_1,\ldots, w_n \rangle$. Then the support $\supp_DT=\{(0,1,1),(1,0,1)\}$ is clearly tight and reconstructible.\\
	Since $T_{(0,1,1)}=\sum_{i}{u_0 \ot v_i \ot w_i}=M_{\langle 1,1,n\rangle}$, we find that $\widetilde{T_{(0,1,1)}}=M_{\langle n,n,n\rangle}$. Similarily we find that $\widetilde{T_{(1,0,1)}}=M_{\langle n,n,n\rangle}$. Let $P$ be the uniform distribution on $\{(0,1,1),(1,0,1)\}$, then $H(P_1)=H(P_2)=\log(2)$ and $H(P_3)=0$. \Cref{thm:laserMethod} now gives
	\[
	\log V_{\omega}(\tilde{T}) \geq 2\log(2) + \log(n^{\omega}),
	\]
	so that $V_{\omega}(\tilde{T}) \geq 4n^{\omega}$.
	The border rank of $T$ is equal to $n+1$:  it cannot be lower than $n+1$, as $T$ is a concise tensor; and on the other hand we can write $T$ as a limit
	\[
	\lim_{t \to 0}{\frac{1}{t}\Big(\sum_{j=1}^{n}{(u_0 + t u_i)\ot(v_0 + t v_i)\ot w_i} - u_0 \ot v_0 \ot (w_1 + \cdots + w_n)\Big)}.
	\]
	Hence the border rank of $\widetilde{T}$ is at most $(n+1)^3$, so \Cref{thm:asymsum} yields
	\[
	4n^{\omega} \leq (n+1)^3.
	\]
	For $n=5$, this gives Strassen's bound $\omega < 2.48$ \cite{Strassen87}.
\end{ex}

\section{Tensors from highest weight vectors}
In \cite{Plethysm}, the fourth author studied the highest weight vectors of the $\mathfrak{sl}_n$-representation $S^3(\mathfrak{gl}_n)$. In this section, we show that several of these highest weight vectors can be identified with the Coppersmith-Winograd tensor. We then study one of the other highest weight vectors, argue it is well-suited for the laser method, and obtain a bound $\omega < 2.45$ from it. 

Given elements $x,y,z$ in a vector space $V$, $xyz:=\frac{1}{6}(x\otimes y\otimes z + x\otimes z\otimes y + y\otimes x\otimes z + y\otimes z\otimes x + z\otimes x\otimes y + z\otimes y\otimes x)$ is a 3-way symmetric tensor in $V\otimes V\otimes V$.
Similarly, $xy:=\frac{1}{2}(x\otimes y + y\otimes x)$ is 
a 2-way symmetric tensor in $V\otimes V$.

We recall the small and big Coppersmith-Winograd tensors $T_{cw,m}$ and $T_{CW,m}$ from \cite{CW}. They are symmetric tensors, precisely elements of $\CC[x_0,\ldots,x_{m+1}]_3$, given by 
\begin{eqnarray*}
	T_{cw,m}=\sum_{i=1}^{m}{3}{x_0x_i^2} & \text{and} & T_{CW,m}={3}x_0^2x_{m+1}+{3}\sum_{i=1}^{m}{x_0x_i^2}
\end{eqnarray*}
with border Waring rank equal to $m+2$.

A basis of $\mathfrak{gl}_n$ is given by $\lbrace E_{i,j}\rbrace_{1\leq i,j\leq n}$, where $E_{i,j}$ is the matrix with a $1$ at position $(i, j)$ and $0$ elsewhere. One of the highest weight vectors in $S^3(\mathfrak{gl}_n)$ is the symmetrized matrix multiplication tensor $\sum_{i,j,k}{E_{i,j}E_{j,k}E_{k,i}}$ \cite{chiantini2017polynomials}. 
The other highest weight vectors are listed in \cite[Table 2]{Plethysm}.
An interesting observation is that many of these highest weight vectors 
are, up to a change of variables, equal to $T_{cw,m}$ or $T_{CW,m}$ for some value of $m$.
\begin{prop} \label{prop:HWVCW}
	The following equalities hold, up to a change of variables:
	\begin{eqnarray*}
		IE_{1,n}E_{2,n-1}-IE_{1,n-1}E_{2,n} &=& T_{cw,4}\\
		E_{1,n}E_{1,n-1}E_{2,n} - E_{1,n}E_{1,n}E_{2,n-1} &=& T_{CW,2}\\
		\sum_i{IE_{1,i}E_{i,n}} &=& T_{cw,2n-2}\\
		\sum_i{E_{1,n}E_{1,i}E_{i,n}} &=& T_{CW,2n-4}\\
		\sum_{i,j}{E_{1,n}E_{i,j}E_{j,i}} &=& T_{CW,n^2-2},
	\end{eqnarray*}
	i.e.~five of the highest weight vectors are Coppersmith-Winograd tensors.
\end{prop}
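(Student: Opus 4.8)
The plan is to reduce \Cref{prop:HWVCW} to one elementary lemma about cubics that split off a linear factor. First I would note that, viewed as a cubic polynomial in the $n^2$ coordinates $E_{i,j}$, each left-hand side is a product $\ell\cdot q$ of a linear form $\ell$ and a quadratic form $q$: in rows~1 and~3 one has $\ell=I=\sum_iE_{i,i}$, and in rows~2,~4 and~5 one has $\ell=E_{1,n}$. So it suffices to prove the following: \emph{if $F=\ell\cdot q$ with $\ell$ a nonzero linear form, $q$ a quadratic form of rank $r\geq 3$ and $\ell\nmid q$, and if $s:=\rank\!\bigl(q|_{\{\ell=0\}}\bigr)$, then $F$ equals, up to a linear change of coordinates, $T_{cw,r}$ when $s=r$ and $T_{CW,\,r-2}$ when $s=r-2$.}

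I would prove this by normalising coordinates. Choose linear coordinates $x_0,x_1,x_2,\dots$ with $\ell=x_0$ and with $x_1,\dots,x_s$ diagonalising $q|_{\{x_0=0\}}$, so that $q=x_1^2+\dots+x_s^2+x_0M$ for some linear form $M$. Passing to the coordinates $x_i+\tfrac12(\partial M/\partial x_i)x_0$ in place of $x_i$ for $1\leq i\leq s$ (completing squares) kills the terms $x_0x_i$ with $i\leq s$ and brings $q$ to the form $x_1^2+\dots+x_s^2+x_0(ax_0+y)$, where $y$ is a linear form in the coordinates other than $x_0,x_1,\dots,x_s$. If $s=r$ then $\rank q=r$ forces $ax_0+y=0$, so $F=x_0(x_1^2+\dots+x_r^2)$, which is $T_{cw,r}$ after the rescaling $x_0\mapsto 3x_0$. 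If $s=r-2$ then $\rank q=r$ forces $y\neq0$, so $z_1:=x_0$ and $z_2:=ax_0+y$ are independent linear forms involving none of $x_1,\dots,x_{r-2}$, and $F=x_0\bigl(x_1^2+\dots+x_{r-2}^2+z_1z_2\bigr)$, which is $T_{CW,\,r-2}$ after rescaling. (The remaining value $s=r-1$ would give a cubic that is \emph{not} of Coppersmith--Winograd type, but it does not occur below.)

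Finally I would read off $\rank q$ and $s$ in each of the five rows. Rows~1 and~2: $q=\pm(E_{1,n}E_{2,n-1}-E_{1,n-1}E_{2,n})$ is nondegenerate in its four variables, so $r=4$; in row~1 none of these variables occurs in $I$, so $s=4$ and $F=T_{cw,4}$, while in row~2 the variable $E_{1,n}$ occurs in $q$ and $q|_{\{E_{1,n}=0\}}=E_{1,n-1}E_{2,n}$ has rank $2$, so $F=T_{CW,2}$. Rows~3 and~4 share $q=\sum_{i=1}^{n}E_{1,i}E_{i,n}$; the one coincidence among the linear forms occurring is that $E_{1,n}$ appears as both the $i=1$ and the $i=n$ summand, so $q=E_{1,n}(E_{1,1}+E_{n,n})+\sum_{i=2}^{n-1}E_{1,i}E_{i,n}$ has rank $2+2(n-2)=2n-2$. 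In row~3 the only variables of $q$ lying in $I$ are $E_{1,1}$ and $E_{n,n}$, and restricting $q$ to $\{\sum_iE_{i,i}=0\}$ by eliminating a diagonal coordinate absent from $q$ leaves $q$ unchanged, so $s=2n-2$ and $F=T_{cw,\,2n-2}$; in row~4 we get $q|_{\{E_{1,n}=0\}}=\sum_{i=2}^{n-1}E_{1,i}E_{i,n}$ of rank $2n-4$, so $F=T_{CW,\,2n-4}$. Row~5: $q=\sum_{i,j}E_{i,j}E_{j,i}=\sum_iE_{i,i}^2+2\sum_{i<j}E_{i,j}E_{j,i}$ has rank $n+2\binom{n}{2}=n^2$, while $q|_{\{E_{1,n}=0\}}$ simply drops the $(1,n)$ summand and so has rank $n^2-2$; hence $F=T_{CW,\,n^2-2}$.

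The only real work is in the last step: one must keep track of which among the many symbols $E_{i,j}$ coincide — otherwise the naive count of rank-two blocks overshoots — and verify $\ell\nmid q$ in each case. I would also note that for small $n$ an off-diagonal symbol in a row can collide with a diagonal one (for instance $E_{2,n-1}=E_{2,2}$ when $n=3$), and that row~3 even degenerates at $n=2$; the identities are intended for $n$ in the range where the listed vectors are genuine highest weight vectors, and the computations above carry over there with only cosmetic changes.
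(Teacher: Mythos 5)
Your proof is correct, but it takes a genuinely different route from the paper. The paper only writes out the fourth identity, via an explicit substitution ($E_{1,n}=3x_0$, $E_{1,1}+E_{n,n}=\tfrac13 x_{2n-3}$, $E_{1,j}=x_{2j-3}+ix_{2j-2}$, $E_{j,n}=x_{2j-3}-ix_{2j-2}$), and declares the other four cases ``similar and left to the reader''. You instead prove a uniform normal-form lemma: for a cubic $F=\ell\cdot q$ the pair $\bigl(\rank q,\ \rank(q|_{\ell=0})\bigr)=(r,s)$ decides everything, with $s=r$ giving $T_{cw,r}$ and $s=r-2$ giving $T_{CW,r-2}$, and then you only have to do rank bookkeeping in the five rows. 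Your case analysis checks out (completing squares preserves $\ell=x_0$ and the residual block $x_0(ax_0+y)$ contributes rank $0$, $1$ or $2$ according to whether $ax_0+y=0$, $y=0\neq a$, or $y\neq 0$), and your rank counts are right, including the one subtlety the paper's substitution also has to handle, namely that in rows 3--4 the $i=1$ and $i=n$ summands collapse to the single rank-$2$ block $E_{1,n}(E_{1,1}+E_{n,n})$, so $r=2n-2$ rather than $2n$. What your approach buys is a conceptual explanation of \emph{why} exactly these highest weight vectors are Coppersmith--Winograd tensors (they are the cubics with a linear factor whose rank defect on the hyperplane is $0$ or $2$), plus a single argument covering all five rows and flagging the small-$n$ degenerations; what the paper's approach buys is brevity, since the explicit complex change of variables diagonalising the hyperbolic blocks $E_{1,j}E_{j,n}=x_{2j-3}^2+x_{2j-2}^2$ produces the stated normal form in two lines for the representative case.
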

\begin{proof}
	We only prove the fourth equality; the other ones are similar and left to the reader.
	Note that
	\[
	\sum_{j=1}^n{E_{1,n}E_{1,j}E_{j,n}} = E_{1,n}^2(E_{1,1}+E_{n,n}) + \sum_{j=2}^{n-1}{E_{1,n}E_{1,j}E_{j,n}}.
	\]
	If we substitute $E_{1,n}=3x_0$, $E_{1,1}+E_{n,n}=\frac{1}{3}x_{2n-3}$, $E_{1,j}=x_{2j-3}+ix_{2j-2}$ and $E_{j,n}=x_{2j-3}-ix_{2j-2}$ for $2 \leq j \leq n-1$ (where $i$ is the imaginary unit, so that $E_{1,j}E_{j,n}=x_{2j-3}^2+x_{2j-2}^2$), we obtain the Coppersmith-Winograd tensor $T_{CW,2n-4}$.
\end{proof}

We now focus on one of the other highest weight vectors from \cite[Table 2]{Plethysm}, that is not a Coppersmith-Winograd tensor.
Let $n \geq 3$ and write
\[
 T_{HW,n} := \sum_{i=1}^n (E_{1,n}E_{2,i}E_{i,n} - E_{2,n}E_{1,i}E_{i,n}) \in S^3(\CC^{n^2}).
\]
This is not a concise tensor, but we can make it concise by changing the ambient space: first, rewrite $T_{HW,n}$ as
\[
E_{1,n}^2E_{2,1} - E_{2,n}^2E_{1,2} + E_{1,n}E_{2,n}(E_{2,2}-E_{1,1})+ \sum_{i=3}^{n-1} (E_{1,n}E_{2,i}E_{i,n} - E_{2,n}E_{1,i}E_{i,n}).
\]
For every $i \in \{3, \ldots, n-1\}$, we put $E_{i,n}=x_{i-3}$, $E_{2,i}=y_{i-3}$, and $E_{1,i}=z_{i-3}$.
Moreover, we put $E_{1,1}-E_{2,2}=b_0$,  $E_{1,n}=a_1$, $E_{2,n}=-a_2$, $E_{2,1}=b_1$, $E_{1,2}=-b_2$. Then, after suitably rescaling, our tensor becomes
\[
T_{HW,m}=6a_1a_2b_0+ 3a_1^2b_1 +3a_2^2b_2 + 6\sum_{j=1}^{m} (a_1x_jy_j + a_2x_jz_j) \in S^3(V) \subseteq V \ot V \ot V,
\]
where $m=n-3$ and $V$ is the $(3m+5)$-dimensional vector space with basis \[
\{a_1, a_2, x_1, \ldots, x_m,y_1, \ldots, y_m,z_1, \ldots, z_m, b_0, b_1, b_2\}.
\]
The space $L_{T_{HW,m}}$, 
made of all possible contractions of the tensor $T_{HW,m}$, consists of all matrices of the form 

\begin{equation}\label{eq:matrix}
\left(
\begin{array}{cc|ccc|ccc|ccc|ccc}
b_1 & b_0 & y_1 & \ldots  & y_m & x_{1} & \ldots & x_{m} & 0 & \ldots & 0 & a_2 & a_1 & 0\\
b_0 & b_2 & z_1 & \ldots  & z_m & 0  & \ldots & 0 & x_1 & \ldots & x_m & a_1 & 0 & a_2\\
\hline
y_1 &z_1 &  &   &  & a_{1} & &  & a_{2} &  &  &  & &\\
\vdots&\vdots &&&&& \ddots&&&\ddots&&&&\\
y_m&z_m &  &   &  &  & & a_{1} &  &  & a_{2} &  && \\
\hline
x_1&0 & a_{1}  &   &  &  & &  &  &  &  &  && \\
\vdots&\vdots&&\ddots&&&&&&&&&&\\
x_m &0 &  &   & a_{1} &  & &  &  &  &  &  && \\
\hline
0&x_1 & a_{2}  &   &  &  & &  &  &  &  &  && \\
\vdots&\vdots&&\ddots&&&&&&&&&&\\
0&x_m &  &   & a_{2} &  & &  &  &  &  &  &&\\
\hline
a_2&a_1&&&&&&&&&&&\\
a_1&0&&&&&&&&&&&\\
0&a_2&&&&&&&&&&&
\end{array}
\right).
\end{equation}
 
It is easy to see that $T_{HW,m}$ is concise, as the induced map $V^{*} \to V \ot V$ is injective. From this, it follows that $T_{HW,m}$ has border rank at least $\dim V =3m+5$. We are particularly interested in this highest weight vector because it has minimal border rank.
\begin{thm} \label{conj:minbrank}
	The border rank of $T_{HW,m}$ is equal to $3m+5$.
\end{thm}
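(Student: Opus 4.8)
The plan is to exhibit an explicit rank-$(3m+5)$ border decomposition of $T_{HW,m}$, or equivalently a one-parameter degeneration of a generic (diagonalizable) tensor to $T_{HW,m}$. Since conciseness already forces $\brk(T_{HW,m}) \geq 3m+5$, everything reduces to proving the upper bound $\brk(T_{HW,m}) \leq 3m+5$. The most efficient route is to use one of the known structural characterizations of minimal border rank tensors rather than guessing a limit by hand. Concretely, I would invoke the smoothability approach alluded to in the introduction: write $T_{HW,m}$ as (a suitable degeneration of) the multiplication tensor of a finite-dimensional local algebra $A$, and show that $A$ is smoothable; then minimal border rank follows because the structure tensor of a smoothable algebra of dimension $N$ degenerates to the diagonal tensor $\bigoplus_{i=1}^N \langle 1,1,1\rangle$, which has border rank $N$.

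First I would identify the algebra. Reading off the space $L_{T_{HW,m}}$ in \eqref{eq:matrix}, the tensor is symmetric and the matrices have an evident ``Hankel-like'' shape with a distinguished identity-like entry coming from the variable dual to the socle; this is the hallmark of $L_A$ for $A = \CC[\ldots]/I$ with a one-dimensional socle, i.e. a Gorenstein algebra, presented via apolarity from the cubic form $F = 6a_1a_2b_0 + 3a_1^2b_1 + 3a_2^2b_2 + 6\sum_j(a_1x_jy_j + a_2x_jz_j)$. So the natural candidate is the apolar algebra $A = \CC[\partial]/F^{\perp}$, which has Hilbert function $(1, 3m+3, 1)$ (length $3m+5$). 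Alternatively — and this matches the narrative of the paper, which emphasizes Hilbert function $(1,n,k)$ algebras — one degenerates $T_{HW,m}$ slightly to make the relevant algebra have Hilbert function $(1, 3m+3, 1)$ or similar, and then quotes \cite{CEVV09}: every finite local $\CC$-algebra with Hilbert function $(1,n,1)$ is smoothable. The key computation is to verify that $A$ really has the claimed Hilbert function and socle dimension, which is a direct apolarity calculation with the explicit $F$: compute the catalecticant $\Cat_{1,2}(F)$, check its rank is $3m+3$, and confirm the socle is one-dimensional.

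The steps in order: (1) recognize $T_{HW,m}$ as concise and symmetric, hence treat it via apolarity; (2) compute $F^{\perp}$ far enough to read off the Hilbert function of $A=\CC[\partial]/F^{\perp}$, showing it is $(1, 3m+3, 1)$; (3) conclude $A$ is smoothable, either directly from \cite{CEVV09} if the Hilbert function is literally of the form $(1,N,1)$, or by first exhibiting a flat degeneration of $A$ (equivalently a degeneration of $T_{HW,m}$) to an algebra with such a Hilbert function; (4) invoke the standard fact that the structure tensor of a smoothable algebra of length $N$ degenerates to $\bigoplus^N \langle 1,1,1 \rangle$, so its border rank is $N = 3m+5$; (5) note that the border rank of $T_{HW,m}$ is bounded above by that of its structure tensor (they agree for concise symmetric tensors of this type), and combine with the conciseness lower bound to get equality.

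I expect the main obstacle to be step (2)–(3): one must be careful about whether the apolar algebra of this particular $F$ has Hilbert function exactly $(1,3m+3,1)$ — the summands $a_1^2 b_1$ and $a_2^2 b_2$ together with the ``product'' terms $a_1 x_j y_j$ and $a_2 x_j z_j$ create some interaction, and it is conceivable the middle Betti number drops, in which case $A$ has a different (but still possibly smoothable) structure, or one genuinely needs the extra degeneration trick. If the Hilbert function is not of the pristine $(1,N,1)$ shape, the fallback is to build an explicit toric/monomial degeneration of $T_{HW,m}$ whose associated (graded) algebra is visibly smoothable — e.g. degenerate to a monomial Gorenstein algebra — and then use upper semicontinuity of border rank. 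A secondary, more mundane difficulty is bookkeeping: tracking the $3m+3$ middle-degree coordinates through the catalecticant and confirming its rank, which is linear algebra but needs care with the $x_j, y_j, z_j$ blocks and the $a_1, a_2, b_0, b_1, b_2$ ``boundary'' variables.
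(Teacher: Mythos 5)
There is a genuine gap, and it sits exactly where your plan puts all the weight: the identification of $T_{HW,m}$ with the multiplication tensor of a small smoothable algebra. The apolar algebra $\CC[\partial]/F^{\perp}$ of the cubic $F=6a_1a_2b_0+3a_1^2b_1+3a_2^2b_2+6\sum_j(a_1x_jy_j+a_2x_jz_j)$ cannot have Hilbert function $(1,3m+3,1)$: a cubic has socle degree $3$, and since $T_{HW,m}$ is concise all $3m+5$ first partials are independent, so the Hilbert function is $(1,3m+5,3m+5,1)$ and the algebra has length $6m+12$. Its multiplication tensor therefore lives in a much bigger space, and minimal border rank for it would only give $\brk(T_{HW,m})\le 6m+12$, which is useless here. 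Worse, $T_{HW,m}$ is not isomorphic to the structure tensor of \emph{any} unital algebra of dimension $3m+5$: in the space $L_{T_{HW,m}}$ of \eqref{eq:matrix} the last three rows are supported only in the first two columns, so no matrix in the space is invertible (the tensor is not $1$-generic), whereas a unital multiplication tensor always contains the identity matrix in its contraction space. So the route ``$T_{HW,m}=T_A$ with $A$ smoothable, then \cite[Corollary 3.6]{Blaser} and \cite{CEVV09}'' is blocked at the first step; that mechanism is what the paper uses for a \emph{different} family (the $(1,3m,2)$ algebras of Section 4), not for $T_{HW,m}$.

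Your fallback also points in the wrong direction: degenerating $T_{HW,m}$ to a monomial/toric tensor and invoking semicontinuity only bounds the border rank of the \emph{degenerated} tensor by $\brk(T_{HW,m})$; for the upper bound you need the reverse, namely to exhibit $T_{HW,m}$ as a degeneration of (or limit of) rank-$(3m+5)$ tensors. This is precisely what the paper does, by a direct construction: it writes down $3m+5$ explicit rank-one symmetric matrices depending on $t$ (the families $A_{i,t},B_{i,t},C_{i,t},D,E_{1,t},E_{2,t},F_t,G_t$) and checks that suitable $t$-dependent linear combinations converge to a basis of $L_{T_{HW,m}}$, which packages into a family of rank-$\le 3m+5$ tensors converging to $T_{HW,m}$; the conciseness lower bound you state then gives equality. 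If you want to salvage the apolarity idea, the correct formulation is not the apolar \emph{algebra} but a saturated (or limit of saturated) apolar ideal of a length-$(3m+5)$ smoothable scheme inside $F^{\perp}$; that would even give border \emph{Waring} rank $3m+5$, but the paper explicitly leaves this open for general $m$ (it is verified only for $m\le 1$ in \Cref{prop:wrk} and discussed in the remark on border apolarity), so your proposal as it stands replaces the theorem by a strictly harder open problem rather than proving it.
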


\begin{proof}

The case $m=0$ follows from \Cref{prop:wrk}.
For all $m\geq 1$, consider the following collection of rank 1 symmetric tensors in $V\otimes V$:

\begin{itemize}

\item $A_{i,t}:=\left(t^{-1}a_1+tx_i+t^2y_i\right)^2=t^{-2}a_1^2+2a_1x_i+2ta_1y_i+t^2x_i^2+2t^3x_iy_i+t^4y_i^2$,

\item $B_{i,t}:=\left(t^{-1}a_2+tx_i+t^2z_i\right)^2=t^{-2}a_2^2+2a_2x_i+2ta_2z_i+t^2x_i^2+2t^3x_iz_i+t^4y_i^2$, 

\item $C_{i,t}:=\left(\sqrt{2}t^{-1}a_1/2+\sqrt{2}t^{-1}a_2/2+\sqrt{2}tx_i\right)^2$,

\item $D:=(a_1-a_2)^2=a_1^2+a_2^2-2a_1a_2$, 

\item $E_{1,t}:=\left(t^{-2}a_1+t^2 \sum_{j=1}^m x_j+t^3 \sum_{j=1}^m y_j+t^5 b_0-t^5 b_1\right)^2$,

\item $E_{2,t}:=\left(t^{-2}a_2+t^2 \sum_{j=1}^m x_j+t^3 \sum_{j=1}^m z_j+t^5 b_0-t^5 b_2\right)^2$,

\item $F_t:= \left(\sqrt{2}t^{-2}a_1/2+\sqrt{2}t^{-2}a_2/2+\sqrt{2}t^2\sum_{j=1}^m x_j+2\sqrt{2} t^5b_0\right)^2$,

\item $G_t:=\left(t^{-4}-m t^{-2}-1/2-l\right)a_1^2+\left(m t^{-2}-t^{-4}-1/2-l\right)a_2^2+\left(2l-1\right)a_1a_2$, 
\end{itemize}
with $l:=(mt^2-1)^2/(2t^8)$.

\medskip

We want to prove that $\lim_{t\rightarrow 0}\langle A_{i,t},B_{i,t},C_{i,t},D,E_{1,t},E_{2,t},F_t,G_t\rangle=L_{T_{HW,m}}$.

\medskip

Let $L_{T_{HW,m}}\vert_{x_i=1}$ denote the matrix in $L_{T_{HW,m}}$ obtained by specializing $x_i$ to $1$ and the rest of the variables to $0$.
We use similar notation for other variables. We can build
$3m+3$ generators of the $(3m+5)$-dimensional space of matrices $L_{T_{HW,m}}$ as follows:

\begin{itemize}
\item $\lim\limits_{t\to 0} A_{i,t}-t^2 E_{1,t}= 2a_1x_i=L_{T}\vert_{y_i=1}$,
\medskip
\item $\lim\limits_{t\to 0} B_{i,t}-t^2 E_{2,t}= 2a_2x_i=L_{T}\vert_{z_i=1}$,
\medskip
\item $\lim\limits_{t\to 0}t^{-1}\left(A_{i,t}+B_{i,t}-C_{i,t}-(1/2t^2)D\right)=2a_1y_i+2a_2z_i=L_{T}\vert_{x_i=1}$.
\item $\lim\limits_{t\to 0} t^2 A_{i,t}= a_1^2=L_{T}\vert_{b_1=1}$,
\medskip
\item $\lim\limits_{t\to 0} t^2 B_{i,t}= a_2^2=L_{T}\vert_{b_2=1}$,
\medskip
\item $\lim\limits_{t\to 0} t^2 A_{i,t}+t^2 B_{i,t}-D=2a_1a_2=L_{T}\vert_{b_0=1}$.
\end{itemize}




\medskip

The remaining two can be obtained as follows:

\begin{align*}
    \lim_{t\to 0} &\ t^{-3}\left(\sum_{j=1}^m
    A_{j,t}+\sum_{j=1}^m B_{j,t}-\sum_{j=1}^m
    C_{j,t}-E_{1,t}-E_{2,t}+\left(\frac{1}{2t^4}-\frac{m}{2t^2}\right)D+F_t\right)=\\
    & 2a_1b_0+2a_1b_1+2a_2b_0+2a_2b_2+2\sum_{j=1}^m x_jy_j+2\sum_{j=1}^m
    x_jz_j=L_{T}\vert_{a_1=a_2=1}\\
    \lim_{t\to 0} &\ t^{-3}\left(\sum_{j=1}^m A_{j,t}-\sum_{j=1}^m
    B_{j,t}-E_{1,t}+E_{2,t}+t^4F_t+lD+G_t\right)=\\
    &-2a_1b_0+2a_1b_1+2a_2b_0-2a_2b_2+2\sum_{j=1}^m x_jy_j-2\sum_{j=1}^m
    x_jz_j=L_{T}\vert_{a_1=-a_2=1}.
\end{align*}


\end{proof}

Even stronger, we conjecture that the border \emph{Waring} rank of $T_{HW,m}$ is equal to $3m+5$. For $m=0,1$ we have the following symmetric border decompositions.
\begin{prop}\label{prop:wrk}
	For $m = 0$ and $m=1$, the tensor $T_{HW,m}$ has border Waring rank $3m+5$.
\end{prop}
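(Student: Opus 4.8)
The lower bound requires nothing new: a symmetric border rank decomposition is in particular a border rank decomposition of tensors, so the border Waring rank of $T_{HW,m}$ is at least $\brk(T_{HW,m})$, which equals $\dim V=3m+5$ because $T_{HW,m}$ is concise (as observed just before \Cref{conj:minbrank}). So the task is, for $m\in\{0,1\}$, to exhibit a one-parameter family of $3m+5$ linear forms $\ell_i(t)\in V$ and scalars $\lambda_i(t)\in\CC(t)$ with $\sum_{i=1}^{3m+5}\lambda_i(t)\,\ell_i(t)^3\to T_{HW,m}$ as $t\to 0$ (cubes taken in $S^3V$); equivalently, by apolarity, a smoothable length-$(3m+5)$ subscheme of $\PP(V^*)$ apolar to $T_{HW,m}$.

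For $m=0$ I would produce $T_{HW,0}=3(a_1^2b_1+2a_1a_2b_0+a_2^2b_2)$ as the first-order term of a linear syzygy among five cubes of forms living only in the two variables $a_1,a_2$. Choose five distinct points $[p_1],\dots,[p_5]\in\PP\langle a_1,a_2\rangle$. The cubes $p_i^3$ span the $4$-dimensional space $S^3\langle a_1,a_2\rangle$, so they satisfy a relation $\sum_i c_ip_i^3=0$; since the $p_i^3$ lie on a rational normal cubic, any four of them are independent, whence the relation is unique up to scale and all $c_i\neq 0$. The squares $p_i^2$ span $S^2\langle a_1,a_2\rangle$, so for each $b\in\{b_0,b_1,b_2\}$ one can solve $\sum_i\gamma_i^{(b)}p_i^2=Q_b$ where $Q_{b_1}=a_1^2$, $Q_{b_0}=2a_1a_2$, $Q_{b_2}=a_2^2$, and set $q_i:=\sum_{b}(\gamma_i^{(b)}/c_i)\,b\in\langle b_0,b_1,b_2\rangle$. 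With $\ell_i(t):=p_i+tq_i$ and $\lambda_i(t):=c_i/t$ one gets $\sum_i\lambda_i(t)\ell_i(t)^3=\tfrac1t\sum_i c_ip_i^3+3\sum_i c_ip_i^2q_i+O(t)=3\sum_i c_ip_i^2q_i+O(t)$, and by construction $3\sum_ic_ip_i^2q_i=3(a_1^2b_1+2a_1a_2b_0+a_2^2b_2)=T_{HW,0}$. Hence the border Waring rank of $T_{HW,0}$ is $\le 5$, and with the lower bound it equals $5$.

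For $m=1$ the same target $3m+5=8$ is wanted, but the construction must also account for the summand $6(a_1x_1y_1+a_2x_1z_1)$ of $T_{HW,1}$, which is not of the ``quadratic in $a_1,a_2$ times linear in $b_0,b_1,b_2$'' shape used above; this, I expect, is where the work lies. Two routes seem viable: (i) an analogous syzygy-and-deformation argument, now with some of the eight cubes built from forms in $\langle a_1,a_2,x_1\rangle$ and deformed along $y_1,z_1$, arranged so that the unwanted monomials cancel against the $b$-block cubes; or (ii) recycling the eight rank-one symmetric matrices $A_{1,t},B_{1,t},C_{1,t},D,E_{1,t},E_{2,t},F_t,G_t$ from the proof of \Cref{conj:minbrank} — each of which is a perfect square of a linear form, the parameter $l$ being chosen precisely so that the discriminant of $G_t$ vanishes — then replacing these squares by cubes and solving for coefficients $\lambda_i(t)$ that annihilate the divergent (negative-$t$-power) part of $\sum_i\lambda_i(t)\ell_i(t)^3$. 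In both routes the genuine obstacle is the same: one must match not merely the span of contractions (which is all the border rank proof needs) but the full cubic $T_{HW,1}$, i.e.\ choose the Laurent coefficients $\lambda_i(t)$ so that every negative power of $t$ cancels and the constant term is exactly $T_{HW,1}$ — a finite but fiddly linear-algebra check, of which the displayed $m=0$ computation is the prototype.
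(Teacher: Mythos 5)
Your $m=0$ argument is complete and correct, and in fact more conceptual than the paper's: the paper simply exhibits an explicit five-cube family $T_{0,t}$ with $\lim_{t\to 0}T_{0,t}/t$ equal to $T_{HW,0}$ up to rescaling, whereas you obtain the same bound from the unique syzygy $\sum_i c_ip_i^3=0$ among five cubes on the rational normal cubic in $S^3\langle a_1,a_2\rangle$, together with the surjectivity of $p_i^2\mapsto S^2\langle a_1,a_2\rangle$, which lets you prescribe the first-order term $3\sum_i c_ip_i^2q_i=T_{HW,0}$. The lower bound via conciseness is also fine (and is what the paper uses implicitly).

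The gap is the case $m=1$, which is the real content of the proposition beyond the $m=0$ case: you never produce a decomposition, only two candidate strategies. Route (i) is a plan, not a proof. Route (ii) is moreover not guaranteed to succeed: the eight linear forms in the proof of \Cref{conj:minbrank} were chosen only so that the span of their \emph{squares} degenerates to the contraction space $L_{T_{HW,1}}$; that is necessary, but not sufficient, for their \emph{cubes} to admit Laurent coefficients $\lambda_i(t)$ with $\sum_i\lambda_i(t)\ell_i(t)^3\to T_{HW,1}$ --- a priori the border Waring rank could exceed the border rank, which is exactly why the paper only conjectures equality for general $m$ and proves it for $m\le 1$ by explicit symmetric decompositions. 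The paper closes the $m=1$ case with an explicit eight-cube family $T_{1,t}$ (with summands such as $(a_1-t^2y_1+t^3b_1)^3$ and $\tfrac14(2a_2-2tx_1-t^2z_1+t^3b_2)^3$) satisfying $\lim_{t\to 0}T_{1,t}/t^3=T_{HW,1}$ up to rescaling of variables; some such explicit (or otherwise fully justified) construction is required, and the ``fiddly linear-algebra check'' you defer is precisely the proof.
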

\begin{proof}
We provide explicit Waring rank approximations in both cases. For the case $m=0$, let 
\begin{eqnarray*}
	T_{0,t} = 3(a_1+tb_1)^3
	+6(a_2+tb_2)^3
	+(a_1-2a_2)^3\\
	-3(a_1-a_2+tb_0)^3
	-(a_1+a_2-3tb_0)^3.
\end{eqnarray*}
	Then taking the limit we obtain
	\[
	\lim_{t \to 0}{\frac{T_{0,t}}{t}} = 36a_1a_2b_0+9a_1^2b_1+18a_2^2b_2,
	\]
	which is equal to our tensor $T_{HW,1}$ up to rescaling the variables.
	For the case $m=1$, let
\begin{eqnarray*}
T_{1,t}=(-a_1-a_2+t^3b_0)^3+
\frac{1}{3}(-a_1+a_2)^3+
(a_1-t^2y_1+t^3b_1)^3+\\
\frac{1}{3}(a_1-a_2+3tx_1-3t^3b_0)^3+
\frac{1}{4}(2a_2-2tx_1-t^2z_1+t^3b_2)^3+\\
(-a_1-2tx_1+t^2y_1)^3+
\frac{1}{4}(-2a_2+t^2z_1)^3+
(a_1+a_2+tx_1)^3.\\
\end{eqnarray*}
Then we obtain
\[
\lim_{t \to 0}{\frac{T_{1,t}}{t^3}} = 12a_1a_2b_0+3a_1^2b_1+3a
_2^2b_2+12a_1x_1y_1+6a_2x_1z_1,
\]
which is equal to $T_{HW,1}$ up to rescaling the variables.
\end{proof}

\begin{rem}
    One approach to generalizing Proposition~\ref{prop:wrk} to all $m$ would be using
    border apolarity~\cite{buczBucz_border}. It would be enough to find a
    homogeneous ideal
    \[
        I \subset \CC[a_1, a_2, x_1, \ldots, x_m,y_1, \ldots,
        y_m,z_1, \ldots, z_m, b_0, b_1, b_2]
    \]
        such that $I \subset
    T_{HW,m}^{\perp}$ and prove that $I$ is a limit of saturated ideals of
    points. However, the ideal $I$ is not
    uniquely determined by $T_{HW,m}$. One possible choice of $I$ would be
    $I_0$ given by all quadrics annihilating $T_{HW,m}$ and additionally by
    the element $a_1^4a_2-a_1a_2^4$ and by
    \[
        (a_1^3-a_2^3)x_i\mbox{ for all } i =1, \ldots ,m.
    \]
    This ideal has the correct Hilbert function.
    While the ideal $I_0$ is indeed a limit of saturated ideals for small values of
    $m$, we do not know whether it is so for all $m$.
\end{rem}

We now apply the laser method to the tensor $T_m = T_{HW,m}$. The blocking $D$ is given by $V=V_0 \oplus V_1 \oplus V_2$, with
\[
V_0=\langle a_1,a_2 \rangle, V_1=\langle x_1, \ldots x_m, y_1, \ldots, y_m, z_1, \ldots, z_m \rangle, V_2=\langle b_0,b_1,b_2 \rangle
\]
The support 
of $T_m$ 
is tight: $\{(0,0,2),(0,2,0),(2,0,0),(1,1,0),(1,0,1),(0,1,1)\}$. The block
\begin{equation}\label{eq:doubleStrassen}
T_{m,(1,1,0)} = \sum_{j=1}^{m}{(x_j \ot y_j \ot a_1 + x_j \ot z_j \ot a_2 + y_j \ot x_j \ot a_1 + z_j \ot x_j \ot a_2)},
\end{equation}
%
%
%
can be identified with the Kronecker product 
\[
\left(\sum_{j=0}^{m-1}{e_j \ot e_j \ot 1}\right) \boxot (e_0 \ot e_1 \ot e_1 + e_0 \ot e_2 \ot e_2 + e_1 \ot e_0 \ot e_1 + e_2 \ot e_0 \ot e_2).
\]
The first factor is the matrix multiplication tensor $M_{\langle 1,m,1 \rangle}$, whose symmetrization (with respect to $\ZZ/{3\ZZ} \subset \SG_3$) has value $V_{\omega}(M_{\langle m,m,m \rangle}) = m^{\omega}$. The second factor is Strassen's tensor $T_{STR,2}$ from \Cref{ex:strassen}. We have $V_{\omega}(\widetilde{T_{STR,2}}) \geq 4\cdot2^{\omega}$, and 
\[
V_{\omega}(\widetilde{T_{m,(1,1,0)}}) \geq 4(2m)^{\omega}.
\]

The same holds for $T_{m,(1,0,1)}$ and $T_{m,(0,1,1)}$. 
To compute the value of the remaining blocks, i.e $T_{m,(0,0,2)}$, $T_{m,(2,0,0)}$ and $T_{m,(0,2,0)}$, we apply the laser method to the symmetrization of the tensor
\begin{equation*}
T_{m,(0,0,2)}=\resizebox{.15\hsize}{!}{$
\left(
\begin{array}{c|c}
b_1 & b_0 \\
\hline
b_0 & b_2 \\
\end{array}
\right),$}
\end{equation*}
and the blocking as above, where additionally the letters are in three separate groups. 
Equivalently $T_{m,(0,0,2)}=a_1 \ot a_1 \ot b_1 + a_1 \ot a_2 \ot b_0 + a_2 \ot a_1 \ot b_0 + a_2 \ot a_2 \ot b_2$, and the blocks are simply the summands in this expression. The value of (the symmetrization of) each block is equal to $1$. Hence by assigning the blocks $a_1 \ot a_1 \ot b_1$ and $a_2 \ot a_2 \ot b_2$ a probability $\frac{1}{3}$, and the remaining two blocks a probability $\frac{1}{6}$, the inequality (\ref{eq:laser}) becomes 
\[
\log{V_{\omega}(\widetilde{T_{m,(0,0,2)}})} \geq \log(2) + \log(2) + \log(3) + 0,
\]
so we obtain that this symmetrization $\widetilde{T_{m,(0,0,2)}}$ has value at least $12$.

With the above value estimates, \Cref{thm:laserMethod} yields
\begin{multline*}
3\log(3m+5)=3\log\brk(T_m) \geq \log{V_{\omega}(\widetilde{T_m})} \geq\\ 
{H(P_1)+H(P_2)+H(P_3)}
+\left(P(2,0,0)+P(0,2,0)+P(0,0,2)\right)\log(12)\\
 + \left(P(1,1,0)+P(1,0,1)+P(0,1,1)\right)\log\left(4(2m)^{\omega}\right) \text{.}
\end{multline*}
We obtain for every $m$ a bound on $\omega$, by optimizing over all probability distributions on $\supp_DT_m$. For $m=7$ we obtain 
$\omega < 2.45$. We point out that our analysis may be improved.

\section{Tensors from smoothable algebras}
In this section we present a second approach for finding new tensors suitable for the laser method, namely by considering multiplication tensors of smoothable algebras. We present a concrete example and obtain a bound $\omega <2.431$. 

An algebra $A$ is called \emph{smoothable} if $\Spec(A)$ is a smoothable scheme. Its multiplication map $V \ot V \to V$ (where $V$ is the underlying vector space of $A$) can be seen as a tensor $T_A \in V^* \ot V^* \ot V$.
\begin{prop}[{\cite[Corollary 3.6]{Blaser}}] \label{prop:smoothablebrank}
	If $A$ is a smoothable algebra, then the multiplication map of $A$ is a tensor of minimal border rank.
\end{prop}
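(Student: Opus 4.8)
The plan is to bound $\brk(T_A)$ from both sides: conciseness of $T_A$ gives the lower bound $\brk(T_A)\ge\dim A$, while the flat degeneration witnessing smoothability, combined with the closedness of secant varieties, gives the matching upper bound.

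For the lower bound, write $n=\dim A$ and let $V$ be the underlying vector space, so that $T_A\in V^*\otimes V^*\otimes V$. Its three contraction maps are $V\to\End(V)$, $a\mapsto L_a$ (left multiplication), $V\to\End(V)$, $b\mapsto R_b$ (right multiplication), and $V^*\to(V\otimes V)^*$, $\xi\mapsto\bigl((a,b)\mapsto\xi(ab)\bigr)$. Because $A$ is unital, $L_a(1)=a$ and $R_b(1)=b$ show that the first two are injective, and $1=1\cdot 1\in A\cdot A$ forces $A\cdot A=A$, so the third is injective as well. Hence $T_A$ is concise, and by the inequality recalled in Section 2, $\brk(T_A)\ge\dim V=n$.

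For the upper bound I would use the degeneration. By definition of smoothability, $[\Spec A]$ lies in the closure of the locus $H^\circ$ of reduced $n$-point subschemes inside a Hilbert scheme $\Hilb^n(\mathbb{A}^N)$. Since $H^\circ$ is irreducible and $[\Spec A]\in\overline{H^\circ}$, one can choose an irreducible curve in $\overline{H^\circ}$ through $[\Spec A]$ meeting $H^\circ$; normalising it and pulling back the universal family yields a flat family of $n$-dimensional algebras over a smooth pointed curve $(C,c_0)$ with $A_{c_0}\cong A$ and, for all $c\neq c_0$ near $c_0$, $\Spec A_c$ a disjoint union of $n$ reduced $\CC$-points, so that $A_c\cong\CC^n$. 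The family is finite and flat over $C$, hence locally free; trivialising it on an analytic disk around $c_0$ by a choice of basis turns it into a holomorphic family of tensors $T_t\in V^*\otimes V^*\otimes V$ in a fixed ambient space, with $T_0=T_A$ and, for $t\neq0$, $T_t$ the multiplication tensor of an algebra isomorphic to $\CC^n$. The multiplication tensor of $\CC^n$ is the unit tensor $\langle n\rangle=\sum_{i=1}^n e_i^*\otimes e_i^*\otimes e_i$, of rank $n$; since having rank $\le n$ is preserved by the $\GL(V)^{\times3}$-action, every $T_t$ with $t\neq0$ lies in the $n$-th secant variety $\sigma_n$ of the Segre variety. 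As $\sigma_n$ is Zariski-closed, hence closed in the Euclidean topology, letting $t\to0$ gives $T_A\in\sigma_n$, i.e.\ $\brk(T_A)\le n$. Together with the previous paragraph this gives $\brk(T_A)=n$, which is exactly minimality of the border rank of the concise tensor $T_A$.

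The conciseness computation and the semicontinuity of border rank used at the end (equivalently, Zariski-closedness of $\sigma_n$, which holds because secant varieties of projective varieties are closed) are routine. The step that needs genuine care — and which I expect to be the main obstacle — is the passage from the abstract hypothesis ``$\Spec A$ is smoothable'' to a concrete one-parameter family of \emph{algebras} with the stated properties: producing the arc through $[\Spec A]$ that lands generically in the smooth locus, checking that over $\CC$ the general fibre is literally the split algebra $\CC^n$ (so that its tensor is $\langle n\rangle$ up to a linear change of basis), and trivialising the coherent sheaf of algebras so that the structure constants, and hence the tensor, depend holomorphically on the parameter and specialise to $T_A$ at $t=0$.
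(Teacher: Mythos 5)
Your proof is correct. The paper does not actually prove this proposition---it is imported verbatim from Bl\"aser--Lysikov (the cited Corollary 3.6)---and your argument is essentially the standard one underlying that citation: conciseness via unitality ($L_a(1)=a$, $R_b(1)=b$, $A\cdot A=A$) for the lower bound $\brk(T_A)\ge\dim A$, and, for the upper bound, a one-parameter flat family extracted from the smoothable component of $\Hilb^n(\AAA^N)$, trivialized so that the structure-constant tensors depend holomorphically on $t$, equal $T_A$ at $t=0$, and are isomorphic to the unit tensor $\langle n\rangle$ for $t\neq 0$, whence $T_A$ lies in the (closed) $n$-th secant variety. Two minor remarks: your family in fact shows the stronger statement that $T_A$ is a degeneration of $\langle n\rangle$ (which is how the cited source phrases it), and the identification of ``$\Spec(A)$ is smoothable'' with ``$[\Spec A]$ lies in the closure of the locus of reduced subschemes'' is the standard equivalence of abstract and embedded smoothability, which deserves its own citation (e.g.\ to the literature around \cite{CEVV09}) rather than the phrase ``by definition''.
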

\begin{thm}[{\cite[Propositions 4.12 and 4.13]{CEVV09}}]\label{thm:hilbsmooth}
	If $A$ is a local algebra with Hilbert function $(1,n,1)$ or $(1,n,2)$, then $A$ is smoothable.
\end{thm}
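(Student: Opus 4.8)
The plan is to reduce the statement to one about graded algebras and then combine the irreducibility of an explicit parameter space with the fact that the smoothable locus is closed. First I would note that a local algebra $A$ with Hilbert function $(1,n,1)$ or $(1,n,2)$ satisfies $\mathfrak{m}^3=0$ (by Nakayama, the Hilbert function vanishing in degree $\ge 3$), and that such an $A$ is isomorphic to its associated graded algebra: writing $V=\mathfrak{m}/\mathfrak{m}^2$, $U=\mathfrak{m}^2$ and choosing a linear splitting $\mathfrak{m}=V'\oplus U$ with $V'\xrightarrow{\sim}V$, for $x=v+u$ and $y=v'+u'$ in $\mathfrak{m}$ the products $vu',uv',uu'$ lie in $\mathfrak{m}\cdot\mathfrak{m}^2=0$, so $xy=vv'\in U$ depends only on the images of $x,y$ in $V$; the multiplication is thus recorded by a single surjective symmetric map $b\colon \Sym^2 V\to U$, which is exactly the multiplication of $\operatorname{gr}_{\mathfrak{m}}A=\CC\oplus V\oplus U$. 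Hence it is enough to prove that every graded algebra $A_b:=\CC[x_1,\dots,x_n]/(K+\mathfrak{m}^3)$ is smoothable, with $K=\ker b\subseteq \Sym^2(\CC^n)$ of codimension $e:=\dim U\in\{1,2\}$.

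Next I would note that $A_b$ has embedding dimension $n$, so $\Spec A_b\subseteq \AAA^n$, and the locus $\mathcal{H}\subseteq \Hilb^{n+1+e}(\AAA^n)$ of subschemes of this form (supported at an arbitrary point) is isomorphic to $\AAA^n$ times the Grassmannian of codimension-$e$ subspaces of $\Sym^2(\CC^n)$; in particular $\mathcal{H}$ is irreducible. Since smoothability is insensitive to the choice of ambient affine space of dimension at least the embedding dimension, and since the smoothable locus of $\Hilb$ is closed, it suffices to prove that the \emph{generic} member of $\mathcal{H}$ is smoothable, for each of the two Hilbert types.

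For this last step I would exhibit the generic $A_b$ as a flat limit of a reduced scheme, using the $\mathbb{G}_m$-degeneration of a scheme to its tangent cone: if $Z=\{q_1,\dots,q_N\}\subseteq \AAA^n$ are $N=n+1+e$ general points then $\lim_{t\to 0}(t\cdot Z)=\Spec\bigl(\CC[x]/\operatorname{in}(I_Z)\bigr)$, where $I_Z$ is the ideal of $Z$. Since $N$ general points impose independent conditions in each degree, a short computation gives $\operatorname{in}(I_Z)=K_Z+\mathfrak{m}^3$, with $K_Z$ the codimension-$e$ span of the leading quadratic forms of the degree-$\le 2$ polynomials vanishing on $Z$; so this limit lies in $\mathcal{H}$. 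For $e=1$ the line orthogonal to $K_Z$ is spanned by $\sum_i\alpha_i\ell_{q_i}^2$ for the unique affine syzygy $\sum_i\alpha_i=\sum_i\alpha_i q_i=0$, which for general $q_i$ is a quadric $Q$ of full rank $n$, so $A_b=\CC[x]/\operatorname{Ann}(Q)$ is the compressed Gorenstein algebra, i.e.\ the generic member of $\mathcal{H}$; for $e=2$ one obtains the pencil spanned by the two analogous syzygy combinations, and one must check that this is a generic pencil of quadrics — the generic pencil being regular with distinct eigenvalues, hence simultaneously diagonalizable, hence contained in the span of $n$ points of the quadratic Veronese, which for $N=n+3\ge n$ general points is attainable.

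The hard part is exactly this density claim for $(1,n,2)$: that generic $N$-tuples of points yield a generic pencil of quadrics, a secant-variety statement that is not purely formal. A more computational alternative — which I expect is closer to the original proof of Cartwright--Erman--Velasco--Viray — avoids it by using the Kronecker--Weierstrass normal form of the pencil of symmetric bilinear forms attached to $b$. One first strips off any degree-$1$ socle: a vector $v\in V$ with $b(v,V)=0$ gives $A_b\cong (A_b/(v))\ltimes\CC$, a flat degeneration of $(A_b/(v))\times\CC$ (bring a reduced point in along a new coordinate axis), and degenerations of smoothable algebras are smoothable, so one reduces to the level case. The remaining pencil decomposes into regular blocks (each a fattened curvilinear scheme, smoothed by an apolar one-parameter family) and singular minimal-index blocks (smoothed by explicit point configurations), and the block-wise smoothings must then be glued into a single flat family for $A_b$ — the delicate point being that all blocks share the two-dimensional socle $U=\mathfrak{m}^2$, so the block decomposition of the pencil is not a product decomposition of $A_b$, and the $t$-scalings on the various groups of coordinates have to be calibrated so that the flat limit reassembles exactly the prescribed $b$.
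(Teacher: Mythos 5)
First, note that the paper does not prove this statement at all: it is quoted from Cartwright--Erman--Velasco--Viray \cite{CEVV09}, so there is no internal proof to compare against; your attempt is a reproof of the cited result. Within your proposal, the reduction to graded algebras (any local $A$ with $\mm^3=0$ is its own associated graded), the framework ``irreducible parameter space $\mathcal H$ plus closedness of the smoothable component, hence it suffices to smooth the generic member,'' and the complete treatment of the $(1,n,1)$ case (the scaling limit of $n+2$ general points yields a full--rank quadric, and all full--rank quadrics form the dense $GL_n$--orbit) are all correct.

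The genuine gap is exactly where you flag it, in the $(1,n,2)$ case, and the one--line justification you offer does not close it. What you need is that the pencils $K_Z^{\perp}=\{\sum_i\alpha_i q_i^{\ot 2}:\ \sum_i\alpha_i=0,\ \sum_i\alpha_iq_i=0\}$ arising from $n+3$ points $q_i\in\CC^n$ are dense in the Grassmannian of pencils of quadrics. Your argument ``the generic pencil is simultaneously diagonalizable, hence lies in the span of $n$ points of the Veronese, and $N=n+3\ge n$'' does not give this: the $n$ squares produced by simultaneous diagonalization are squares of linear forms in $x_1,\dots,x_n$ only, i.e.\ points of the Veronese at infinity, not points of the form $(1,q,q^{\ot 2})$, and it is precisely the affine--syzygy constraints $\sum\alpha_i=0$, $\sum\alpha_iq_i=0$ that kill the naive configuration (taking $q_i=c_ie_i$ plus three extra general points forces $\alpha_jc_j=0$ against $\alpha_j\neq 0$). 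A dimension count (configurations modulo the affine group have dimension $2n$, generic pencils modulo $GL_n$ have dimension $n-3$) is consistent with density but proves nothing, and realizing a prescribed generic codimension--two space $K$ as $K_Z$ amounts to a nontrivial existence statement about common zeros of the lifted quadrics, which is empty for a generic lift; so this dominance claim requires a real argument (e.g.\ a tangent--space/differential computation at a well--chosen configuration) that the proposal does not supply. Your fallback via the Kronecker--Weierstrass normal form is plausible and indeed closer in spirit to \cite{CEVV09}, but as you yourself note, the block--wise smoothings must be assembled into a single flat family although the blocks share the two--dimensional socle, and that calibration is where the actual work of the $(1,n,2)$ case lies; as written it is a plan, not a proof. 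Since the $(1,n,2)$ case is the half of the theorem the paper actually uses (for the algebra with Hilbert function $(1,3m,2)$), the proposal as it stands does not establish the statement; completing it would mean either proving the density claim or carrying out the normal--form smoothing and gluing in detail, or simply citing \cite{CEVV09} as the paper does.
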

\begin{ex} \label{eg:CWalgebra}
	The Coppersmith-Winograd tensor arises as the multiplication tensor of a smoothable algebra. Let $V$ be an ($n+2$)-dimensional vector space with basis $\{a_0,\ldots,a_{n+1}\}$, and let $A=(V,*)$ be the algebra with unit $a_0$ defined by $a_i*a_i=a_{n+1}$ for all $i=1,\ldots, n$, $a_i*a_j=0$ for $1\leq i < j \leq n+1$, and $a_{n+1}*a_{n+1}=0$.
	\begin{rem}
	We would get an isomorphic algebra if we defined $a_0$ to be the unit, $a_i*a_j=a_{n+1}$ for any $i+j=n+1$ and $0$ otherwise. In particular, we would get isomorphic tensors.
	\end{rem}
	Equivalently we have $A=\CC[a_1, \ldots, a_{n+1}]/I$, where $I$ is the ideal generated by the relations above. 
	The multiplication tensor of $A$ is equal to 
	\[
	\alpha_0 \ot \alpha_0 \ot a_0 + \sum_{i=1}^{n+1}(\alpha_0 \ot \alpha_i \ot a_i+\alpha_i \ot \alpha_0 \ot a_i) + \sum_{i=1}^{n}(\alpha_i \ot \alpha_i \ot a_{n+1}),
	\]
	where $\lbrace\alpha_0,\dots,\alpha_{n+1}\rbrace$ is the dual basis of $\{a_0,\ldots,a_{n+1}\}$. This is the Coppersmith-Winograd tensor $T_{CW,n}$.
	Now, $A$ is a local graded algebra, with $\mm=(x_1,\ldots,x_{n+1})$ and grading given by $\deg(x_i)=1$ for all $i=1,\ldots,n$ and $\deg(x_{n+1})=2$, so the Hilbert function of $A$ is equal to $(1,n,1)$. Hence by \Cref{thm:hilbsmooth} $\Spec(A)$ is a smoothable scheme, and by \Cref{prop:smoothablebrank} $T_{CW,m}$ is of minimal border rank.
\end{ex}
\begin{ex} 
	Consider the $3m+3$-dimensional algebra $A=\CC[a_1,\ldots,a_{3m+2}]/I$, where the ideal $I$ is generated by 
\begin{itemize}
	\item $a_{3m+1}-a_ia_{m+i}$ for $1 \leq i \leq m$,
	\item $a_{3m+2}-a_ia_{2m+i}$ for $1 \leq i \leq m$,
	\item and all products $a_ia_j$, $1 \leq i \leq j \leq 3m+2$ not occuring in one of the above generators.
\end{itemize}
Its Hilbert function is given by $(1,3m,2)$.
As in the previous example, we conclude that $\Spec(A)$ is a smoothable scheme, and hence the multiplication tensor $T_A$ is a tensor of minimal border rank $3m+3$.

Explicitly, $T_A$ is equal to the following tensor in $A^* \ot A^* \ot A$ (with $A \cong \CC^{3m+3}$):
\begin{multline*}
T_A=\alpha_0 \ot \alpha_0 \ot a_0 + \sum_{i=1}^{3m}{(\alpha_0 \ot \alpha_i \ot a_i + \alpha_i \ot \alpha_0 \ot a_i)}\\
+ \sum_{i=1}^{m}(\alpha_i \ot \alpha_{m+i} \ot a_{3m+1} + \alpha_i \ot \alpha_{2m+i} \ot a_{3m+2} + \alpha_{m+i} \ot \alpha_i \ot a_{3m+1} + \alpha_{2m+i} \ot \alpha_i \ot a_{3m+2})\\
+ \alpha_0 \ot \alpha_{3m+1} \ot a_{3m+1} + \alpha_0 \ot \alpha_{3m+2} \ot a_{3m+2}+ \alpha_{3m+1} \ot \alpha_0 \ot a_{3m+1}+ \alpha_{3m+2} \ot \alpha_0 \ot a_{3m+2}.
\end{multline*}

The space $L_{T_A}$ consists of all matrices of the form 

\begin{equation*}
\resizebox{.9\hsize}{!}{$
\left(
\begin{array}{c|ccc|ccc|ccc|cc}
\lambda_0 & \lambda_1 & \ldots  & \lambda_m & \lambda_{m+1} & \ldots & \lambda_{2m} & \lambda_{2m+1} & \ldots & \lambda_{3m} & \lambda_{3m+1} & \lambda_{3m+2}\\
\hline
\lambda_1 &  &   &  & \lambda_{3m+1} & &  & \lambda_{3m+2} &  &  &  & \\
\vdots &&&&& \ddots&&&\ddots&&&\\
\lambda_m &  &   &  &  & & \lambda_{3m+1} &  &  & \lambda_{3m+2} &  & \\
\hline
\lambda_{m+1} & \lambda_{3m+1}  &   &  &  & &  &  &  &  &  & \\
\vdots&&\ddots&&&&&&&&&\\
\lambda_{2m} &  &   & \lambda_{3m+1} &  & &  &  &  &  &  & \\
\hline
\lambda_{2m+1} & \lambda_{3m+2}  &   &  &  & &  &  &  &  &  & \\
\vdots&&\ddots&&&&&&&&&\\
\lambda_{3m} &  &   & \lambda_{3m+2} &  & &  &  &  &  &  & \\
\hline
\lambda_{3m+1}&&&&&&&&&&&\\
\lambda_{3m+2}&&&&&&&&&&&
\end{array}
\right).$}
\end{equation*}
We point out that this looks exactly like the ``multiplication table'' of $A$. 
Also, note that if we take the ideal generated by the $2 \times 2$ minors of this matrix and substitute $\lambda_0=1$ and $\lambda_i=a_i$ for $i>0$, we recover $I$.

We now apply the laser method to $T_A$.
We take the blocking $D$ given by
\[
A^*_0=\langle \alpha_0 \rangle, A^*_1 = \langle \alpha_1, \ldots, \alpha_{3m} \rangle, A^*_2 = \langle \alpha_{3m+1}, \alpha_{3m+2} \rangle
\]
for the first 2 tensor factors, and
\[
A_0=\langle a_{3m+1}, a_{3m+2} \rangle, A_1 = \langle a_1, \ldots, a_{3m} \rangle, A_2 = \langle a_0 \rangle
\]
for the third tensor factor.
The support $\supp_DT_A$ is  again equal to
\[
\{(0,0,2),(0,2,0),(2,0,0),(1,1,0),(1,0,1),(0,1,1)\},
\]
which is tight.
The blocks $T_{A,(2,0,0)} = M_{\langle 2,1,1 \rangle}$, $T_{A,(0,2,0)} = M_{\langle 1,2,1 \rangle}$, $T_{A,(0,0,2)} = M_{\langle 1,1,1 \rangle}$, $T_{A,(0,1,1)} = M_{\langle 1,3m,1 \rangle}$ and $T_{A,(1,0,1)} = M_{\langle 1,1,3m \rangle}$ are all matrix multiplication tensors, and hence their values are known. 
The final block
\begin{multline*} 
T_{A,(1,1,0)} = \sum_{i=1}^{m}(\alpha_i \ot \alpha_{m+i} \ot a_{3m+1} + \alpha_i \ot \alpha_{2m+i} \ot a_{3m+2}\\ + \alpha_{m+i} \ot \alpha_i \ot a_{3m+1} + \alpha_{2m+i} \ot \alpha_i \ot a_{3m+2})
\end{multline*}
is precisely the same as the block (\ref{eq:doubleStrassen}) from previous section. Hence its value is at least $4(2m)^{\omega}$.
We now apply the laser method using this value estimate:
\begin{multline*}
3\log\brk(T_A) \geq \log{V_{\omega}(\widetilde{T_A})} \geq {H(P_1)+H(P_2)+H(P_3)} + P(1,1,0)\log\left(4(2m)^{\omega}\right)\\
+\left(P(2,0,0)+P(0,2,0)\right)\log(2^\omega) + \left(P(1,0,1)+P(0,1,1)\right)\log\left((3m)^{\omega}\right).
\end{multline*}
For fixed $m$ we can obtain a bound on $\omega$, by optimizing over all probability distributions on $\supp_DT_A$. 
The best bound is obtained by putting $m=4$: we obtain $\omega < 2.431$.
\end{ex}

\section{Tensor products of monomial algebras}
Monomial algebras are a special class of smoothable algebras. This last section is devoted to the study of a special family of tensors arising from such an algebra. In particular, we observe that in this case the laser method fails to account for degenerations to the largest possible matrix multiplication tensors.

To a polynomial $f\in \CC[x_1,\dots,x_n]$ we associate the apolar ideal $f^\perp\subset\CC[\partial_{x_1},\dots,\partial_{x_n}]$ of differentials that annihilate $f$ and the algebra $\CC[\partial_{x_1},\dots,\partial_{x_n}]/f^\perp$. 
The last family of tensors we would like to discuss are simply powers of the algebra $\CC[x]/(x^2)$ which is obtained as an apolar to $f=x$. Taking Kronecker powers of the tensor corresponds to the tensor product of algebras.
\begin{lemma}
If two algebras are apolar respectively to $f_1(x_1,\dots,x_n)$ and $f_2(y_1,\dots,y_m)$ then their tensor product is the algebra apolar to the product $f_1(x_1,\dots,x_n)f_2(y_1,\dots,y_m)$.
\end{lemma}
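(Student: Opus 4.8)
The plan is to use the evident tensor-product decompositions $\CC[\partial_{x_1},\dots,\partial_{x_n},\partial_{y_1},\dots,\partial_{y_m}]=P\otimes Q$ and $\CC[x_1,\dots,x_n,y_1,\dots,y_m]=R_x\otimes R_y$, where $P=\CC[\partial_{x_1},\dots,\partial_{x_n}]$, $Q=\CC[\partial_{y_1},\dots,\partial_{y_m}]$, $R_x=\CC[x_1,\dots,x_n]$, $R_y=\CC[y_1,\dots,y_m]$, together with the observation that the differentiation action of $P\otimes Q$ on $R_x\otimes R_y$ is precisely the tensor product of the two differentiation actions. The only computation needed is the resulting product rule $(D\otimes E)(f_1f_2)=D(f_1)\,E(f_2)$ for $D\in P$, $E\in Q$, where $f_1\in R_x$, $f_2\in R_y$ and $f_1f_2$ is identified with $f_1\otimes f_2$. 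Writing $A_i$ for the algebra apolar to $f_i$, so that $A_1=P/f_1^{\perp}$ and $A_2=Q/f_2^{\perp}$, the goal is to show that the algebra apolar to $f_1f_2$, namely $(P\otimes Q)/(f_1f_2)^{\perp}$, is isomorphic to $A_1\otimes A_2$.

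By the standard isomorphism $(R_1\otimes R_2)/(I_1\otimes R_2+R_1\otimes I_2)\cong (R_1/I_1)\otimes (R_2/I_2)$, applied with $R_1=P$, $R_2=Q$, $I_1=f_1^{\perp}$, $I_2=f_2^{\perp}$, it suffices to prove the ideal identity
\[
(f_1f_2)^{\perp}=f_1^{\perp}\otimes Q+P\otimes f_2^{\perp}.
\]
The inclusion $\supseteq$ is immediate from the product rule: if $D\in f_1^{\perp}$ and $E\in Q$ then $(D\otimes E)(f_1f_2)=D(f_1)\,E(f_2)=0$, and symmetrically $P\otimes f_2^{\perp}\subseteq (f_1f_2)^{\perp}$. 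For the reverse inclusion I would argue by linear independence. Choose $b_1,\dots,b_r\in P$ lifting a $\CC$-basis of $A_1$, together with a vector-space complement, so that $P=f_1^{\perp}\oplus\langle b_1,\dots,b_r\rangle$ and hence $P\otimes Q=(f_1^{\perp}\otimes Q)\oplus(\langle b_1,\dots,b_r\rangle\otimes Q)$. Given $D\in(f_1f_2)^{\perp}$, write $D=D'+\sum_{i=1}^{r}b_i\otimes E_i$ with $D'\in f_1^{\perp}\otimes Q$ and $E_i\in Q$; applying the operator to $f_1f_2$, using the containment just proved and the product rule, gives $0=\sum_{i=1}^{r}b_i(f_1)\otimes E_i(f_2)$ in $R_x\otimes R_y$. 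The crucial point is that $b_1(f_1),\dots,b_r(f_1)$ are linearly independent in $R_x$: by definition $f_1^{\perp}$ is exactly the kernel of the linear map $P\to R_x,\ D\mapsto D(f_1)$, so this map factors through an injection $A_1\hookrightarrow R_x$ carrying the chosen basis to an independent family. Since a relation $\sum_i g_i\otimes h_i=0$ in $R_x\otimes R_y$ with the $g_i$ linearly independent forces all $h_i=0$, we conclude $E_i(f_2)=0$, i.e.\ $E_i\in f_2^{\perp}$, for every $i$; hence $D\in f_1^{\perp}\otimes Q+P\otimes f_2^{\perp}$, which establishes the identity and the lemma.

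Alternatively, one can skip the last containment and finish by a dimension count: $A_1$ and $A_2$ are finite-dimensional, the product rule shows that the span of the derivatives of $f_1f_2$ is the tensor product of the spans of the derivatives of $f_1$ and of $f_2$, so $\dim_\CC\left((P\otimes Q)/(f_1f_2)^{\perp}\right)=\dim_\CC A_1\cdot\dim_\CC A_2=\dim_\CC\left((P\otimes Q)/(f_1^{\perp}\otimes Q+P\otimes f_2^{\perp})\right)$, and combined with $\supseteq$ this forces equality. I do not expect a genuine obstacle here; the only steps requiring care are confirming that differentiation on $R_x\otimes R_y$ is literally the tensor product of the two differentiation actions, so that the product rule holds on the nose, and the linear-independence step, which is just the nondegeneracy of the apolarity pairing between $A_1$ and the span of the partial derivatives of $f_1$. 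Everything else is formal manipulation with tensor products and quotients.
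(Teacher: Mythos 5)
Your proposal is correct, and its main route is genuinely different from the paper's. The paper only proves the easy containment $f_1^{\perp}\otimes Q+P\otimes f_2^{\perp}\subseteq (f_1f_2)^{\perp}$ (in your notation), which gives a surjection $\pi\colon A_1\otimes A_2\to (P\otimes Q)/(f_1f_2)^{\perp}$, and then concludes injectivity by a dimension count: it identifies $(P\otimes Q)/(f_1f_2)^{\perp}$ with the space of derivatives $(P\otimes Q)(f_1f_2)$ and observes that this space has dimension $\dim\bigl(P(f_1)\bigr)\cdot\dim\bigl(Q(f_2)\bigr)$ --- which is exactly the ``alternative'' dimension-count finish you sketch in your last paragraph. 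Your primary argument instead establishes the reverse containment directly: splitting $P=f_1^{\perp}\oplus\langle b_1,\dots,b_r\rangle$, applying the product rule $(D\otimes E)(f_1f_2)=D(f_1)E(f_2)$, and using that $b_1(f_1),\dots,b_r(f_1)$ are linearly independent (nondegeneracy of the map $D\mapsto D(f_1)$ on $A_1$) to force $E_i\in f_2^{\perp}$. This is a valid and self-contained proof; it buys you the explicit ideal identity $(f_1f_2)^{\perp}=f_1^{\perp}\otimes Q+P\otimes f_2^{\perp}$, which is slightly more information than an abstract isomorphism, and it avoids invoking the vector-space identification of the apolar algebra with the span of derivatives. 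The paper's route is shorter, trading your basis-splitting step for that identification plus the observation $(P\otimes Q)(f_1f_2)=P(f_1)\otimes Q(f_2)$; both rest on the same product-rule computation, and both produce an algebra (not just vector-space) isomorphism since the relevant maps are induced by the multiplication structure.
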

\begin{proof}
    We write $f := f_1(x_1,\dots,x_n)f_2(y_1,\dots,y_m)$ and let $S_1 := \CC[\partial_{x_1}, \ldots ,\partial_{x_n}]$, $S_2 :=
    \CC[\partial_{y_1}, \ldots ,\partial_{y_m}]$ and $S =
    S_1\otimes_{\CC} S_2 = \CC[\partial_{x_1}, \ldots
    ,\partial_{x_n},\partial_{y_1}, \ldots ,\partial_{y_m}]$.
    We have $S_1(f_2^{\perp}) + S_2(f_1^{\perp}) \subset f^{\perp}$ so that we
    get a surjection $\pi\colon (S_1/f_1^{\perp})\otimes_{\CC}
    (S_2/f_2^{\perp}) \to
    S/f^{\perp}$. But $S/f^{\perp}  \simeq S f$ as vector spaces and we
    see directly that $\dim(S f) = \dim(S_1f_1)\cdot \dim(S_2f_2)$,
    so $\pi$ is an isomorphism.
\end{proof}
Let $A_1=\CC[x]/(x^2)$. Then $A_2:=A\otimes A$ is apolar to the quadric $xy$. 
In particular, it coincides with the Coppersmith-Winograd tensor $T_{CW,2}$. The third tensor power is $A_3:=A_2\otimes A_1$, which is apolar to the cubic $xyz$.
In matrix representation we have 
\begin{equation*}
T_{CW,2}^{\ot 3}=
\resizebox{.5\hsize}{!}{$
	\left(
	\begin{array}{cc|cc||cc|cc}
	a_{0,0} & a_{0,1} & a_{1,0} & a_{1,1} & b_{0,0} & b_{0,1} & b_{1,0} & b_{1,1}\\
	a_{0,1} & 0 & a_{1,1} & 0 & b_{0,1} & 0 & b_{1,1} & 0\\
	\hline
	a_{1,0} & a_{1,1} & 0 & 0 & b_{1,0} & b_{1,1} & 0 & 0\\
	a_{1,1} & 0 & 0 & 0 & b_{1,1} & 0 & 0 & 0\\
	\hline 
	\hline
	b_{0,0} & b_{0,1} & b_{1,0} & b_{1,1} & 0 & 0 & 0 & 0\\
	b_{0,1} & 0 & b_{1,1} & 0 & 0 & 0 & 0 & 0\\
	\hline
	b_{1,0} & b_{1,1} & 0 & 0 & 0 & 0 & 0 & 0\\
	b_{1,1} & 0 & 0 & 0 & 0 & 0 & 0 & 0\\
	\end{array}
	\right).$}
\end{equation*}
 Switching rows and colums 3 and 4, we obtain:
\begin{equation*}
\resizebox{.4\hsize}{!}{$
\left(
\begin{array}{c|ccc|ccc|c}
a & b & c & d & e & f & g & h\\
\hline
b & 0 & e & f & 0 & 0 & h & 0\\
c & e & 0 & g & 0 & h & 0 & 0\\
d & f & g & 0 & h & 0 & 0 & 0\\
\hline
e & 0 & 0 & h & 0 & 0 & 0 & 0\\
f & 0 & h & 0 & 0 & 0 & 0 & 0\\
g & h & 0 & 0 & 0 & 0 & 0 & 0\\
\hline
h & 0 & 0 & 0 & 0 & 0 & 0 & 0\\
\end{array}
\right).$}
\end{equation*}
The blocking above is not reconstructible, but we can analyze it using \cite[Theorem 4.1]{LeGall}. 
The only nontrivial value is that of the block  
\begin{equation*}
\resizebox{.15\hsize}{!}{$
\left(
\begin{array}{c|cc}
0 & e & f \\
\hline
e & 0 & g \\
f & g & 0 \\
\end{array}
\right),$}
\end{equation*}
for which we rerun the laser method using the blocking as above. In the end, the obtained bound is $\omega<2.56$. 

However, what is more interesting is that $A_3\neq T_{CW,6}$, but $A_3$ degenerates to $T_{CW,6}$. In the matrix representation above we can multiply rows and columns $2,3,4$ and $8$ by $t$, and the letters $b,c,d$ and $h$ by $t^{-1}$ (this amounts to acting with diagonal matrices on the corresponding tensor factors). Letting $t \to 0$ we obtain 
\begin{equation*}
	\resizebox{.4\hsize}{!}{$
		\left(
		\begin{array}{c|ccc|ccc|c}
		a & b & c & d & e & f & g & h\\
		\hline
		b & 0 & 0 & 0 & 0 & 0 & h & 0\\
		c & 0 & 0 & 0 & 0 & h & 0 & 0\\
		d & 0 & 0 & 0 & h & 0 & 0 & 0\\
		\hline
		e & 0 & 0 & h & 0 & 0 & 0 & 0\\
		f & 0 & h & 0 & 0 & 0 & 0 & 0\\
		g & h & 0 & 0 & 0 & 0 & 0 & 0\\
		\hline
		h & 0 & 0 & 0 & 0 & 0 & 0 & 0\\
		\end{array}
		\right),$}
\end{equation*}
which is the matrix representation of $T_{CW,6}$. The Hilbert function of $A_3$ is $(1,3,3,1)$ whereas the Hilbert function of the local graded algebra corresponding to the Coppersmith-Winograd tensor $T_{CW,6}$ is $(1,6,1)$, see \Cref{eg:CWalgebra}.

 In particular, this proves that high powers of the $A_3$ tensor (and hence also $A_1$) degenerate to matrix multiplications that are much larger than suggested by the laser method. In particular, the laser method is very far from optimal when analyzing those tensors. We believe this calls for a new method. One of the reasons why laser method applied to the $A_3$ tensor is far from the optimal result is the fact that there are many blocks in $A_3$ that form bigger matrix multiplications (one would say that their phases are adjusted, although this is not required from the input of the algorithm).

\appendix
\section{Proof of \Cref{thm:laserMethod}} \label{sec:appendix}
We present here a proof of our version of the laser method. It is based on the proof of \cite[Theorem 15.41]{BurgisserBook} and very similar to the proof of \cite[Theorem 4.1]{LeGall}. The version (\Cref{thm:laserMethod2}) presented in this appendix is even a bit more general than \Cref{thm:laserMethod}: we consider symmetrization with respect to an arbitrary subgroup of $S_3$, instead of only $\ZZ/{3\ZZ}$. 
We need some preliminary definitions and results.
	\begin{defi}[{\cite[(15.29)]{BurgisserBook}}]
	Let $I,J,K$ be finite sets.
	If $\Psi \subseteq \Phi \subseteq I \times J \times K$, we call $\Psi$ a \emph{combinatorial degeneration} of $\Phi$, written $\Psi \trianglelefteq \Phi$, if there exist functions $\alpha: I \to \ZZ$, $\beta: J \to \ZZ$, $\gamma: K \to \ZZ$, such that $\alpha(i)+\beta(j)+\gamma(k)=0$ whenever $(i,j,k) \in \Psi$, and $\alpha(i)+\beta(j)+\gamma(k)>0$ whenever $(i,j,k) \in \Phi \setminus \Psi$.
\end{defi}
\begin{prop}[{\cite[(15.30)]{BurgisserBook}}] \label{prop: degen}
	Let $D$ be a blocking of $T$, with components $T_{(i,j,k)}$, $(i,j,k) \in \supp_DT \subseteq I \times J \times K$. Let $\Psi$ be a combinatorial degeneration of $\supp_DT$. Then we have
	\[
	\sum_{(i,j,k) \in \Psi}{T_{(i,j,k)}} \trianglelefteq T,
	\]
	where $\trianglelefteq$ denotes the usual tensor degeneration.
\end{prop}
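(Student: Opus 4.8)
The plan is to produce the degeneration by an explicit one-parameter subgroup of $G = GL(U)\times GL(V)\times GL(W)$ acting diagonally with respect to the blocking $D$. Let $\alpha\colon I \to \ZZ$, $\beta\colon J \to \ZZ$, $\gamma\colon K \to \ZZ$ be the functions witnessing $\Psi \trianglelefteq \supp_DT$, so that $\alpha(i)+\beta(j)+\gamma(k)=0$ for $(i,j,k)\in\Psi$ and $\alpha(i)+\beta(j)+\gamma(k)>0$ for $(i,j,k)\in\supp_DT\setminus\Psi$. First I would define, for $t\in\CC^{\times}$, an element $g(t)=(g_U(t),g_V(t),g_W(t))\in G$ where $g_U(t)$ acts on the block $U_i$ as multiplication by the scalar $t^{\alpha(i)}$, while $g_V(t)$ acts on $V_j$ by $t^{\beta(j)}$ and $g_W(t)$ acts on $W_k$ by $t^{\gamma(k)}$. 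Each of these maps is invertible for $t\neq 0$ (the scalars are nonzero powers of $t$), so $g(t)$ is genuinely an element of $G$.

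Next I would compute the action on $T$. Since $T_{(i,j,k)} \in U_i \ot V_j \ot W_k$, we get $g(t)\cdot T_{(i,j,k)} = t^{\alpha(i)+\beta(j)+\gamma(k)}\, T_{(i,j,k)}$, and therefore $g(t)\cdot T = \sum_{(i,j,k)\in\supp_DT} t^{\alpha(i)+\beta(j)+\gamma(k)}\, T_{(i,j,k)}$. By the defining inequalities of a combinatorial degeneration, every exponent occurring here is $\geq 0$: it equals $0$ precisely on $\Psi$ and is $>0$ on $\supp_DT\setminus\Psi$. Hence the curve $t\mapsto g(t)\cdot T$ extends to $t=0$, and $\lim_{t\to 0} g(t)\cdot T = \sum_{(i,j,k)\in\Psi} T_{(i,j,k)}$. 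Since $g(t)\cdot T \in G\cdot T$ for every $t\neq 0$, the limit lies in $\overline{G\cdot T}$, which is exactly the assertion $\sum_{(i,j,k)\in\Psi} T_{(i,j,k)} \trianglelefteq T$.

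There is essentially no serious obstacle in this argument; the only point deserving a moment of care is the requirement that $\alpha,\beta,\gamma$ take values in $\ZZ$ rather than merely in $\RR$, so that $g(t)$ is a well-defined element of $G$ for all $t\in\CC^{\times}$ and $t\mapsto g(t)\cdot T$ is an honest morphism whose value at $0$ belongs to the Zariski closure $\overline{G\cdot T}$. (If one only had rational or real exponents, one could clear denominators and reparametrize $t$, but this is not needed here.)
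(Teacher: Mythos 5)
Your argument is correct and is exactly the standard proof of this fact: the paper itself gives no proof, citing \cite[(15.30)]{BurgisserBook}, and the argument there is precisely this one-parameter scaling $g(t)$ acting by $t^{\alpha(i)},t^{\beta(j)},t^{\gamma(k)}$ on the blocks, so that $g(t)\cdot T$ has only nonnegative exponents and its limit at $t=0$ is $\sum_{(i,j,k)\in\Psi}T_{(i,j,k)}\in\overline{G\cdot T}$. Your closing remark about integrality of $\alpha,\beta,\gamma$ is the right point of care, and nothing further is needed.
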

\begin{defi}
Let $I,J,K$ be finite sets and $\Delta \subseteq I  \times J \times K$.
\begin{itemize}
	\item We say $\Delta$ is a \emph{diagonal}, if the three projections $\Delta \to I$, $\Delta \to J$, $\Delta \to K$ are injective.
	\item Recall that $\Delta$ is \emph{tight}, if there are injections $\alpha: I \to \mathbb{Z}^r$,  $\beta: J \to \mathbb{Z}^r$,  $\gamma: K \to \mathbb{Z}^r$ s.t.\ $\alpha(i)+\beta(j)+\gamma(k)=0$ for all $(i,j,k) \in \Delta$. If moreover $\alpha, \beta, \gamma$ can be chosen such that their images are contained in $\{-b,-b+1,\ldots, b-1,b\}^r$, we say $\Delta$ is \emph{$b$-tight}.
	\item We say $\Delta$ is \emph{balanced}, if the projection $p_I: \Delta \to I$ is surjective, with all fibers of equal cardinality ${|\Delta|}/{I}$, and similar for the other projections $p_J$, $p_K$.
\end{itemize}
\end{defi}
The following theorem explains the relevance of tight sets: they are sets which are not quite diagonal, but contain a large diagonal which is a combinatorial degeneration. In particular, if a tensor $T$ has a tight blocking, it degenerates to a large direct sum of a subset of its blocks. Assuming lower bounds on the values of the blocks, this gives a lower bound on the value of $T$.
The balancedness assumption below is not essential: there is also a version of the theorem without it, but the statement is more complicated.
\begin{thm}[{\cite[(15.39), attributed to Strassen]{BurgisserBook}}] \label{thm:tight}
	There exists a constant $C_b$, only depending on $b$, such that every $b$-tight balanced subset $\Phi \subseteq I \times J \times K$ contains a diagonal of size at least $C_b \cdot \min\{|I|,|J|,|K|\}$, which is a combinatorial degeneration. 
\end{thm}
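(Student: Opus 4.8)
The plan is to reproduce Strassen's combinatorial argument underlying this statement. Fix injections $\alpha\colon I\to\{-b,\dots,b\}^r$, $\beta\colon J\to\{-b,\dots,b\}^r$, $\gamma\colon K\to\{-b,\dots,b\}^r$ witnessing $b$-tightness, so that $\alpha(i)+\beta(j)+\gamma(k)=0$ for every $(i,j,k)\in\Phi$. The first step is cheap structural bookkeeping: tightness forces each pair projection $\Phi\to I\times J$, $\Phi\to J\times K$, $\Phi\to K\times I$ to be injective (if $(i,j,k),(i,j,k')\in\Phi$ then $\gamma(k)=-\alpha(i)-\beta(j)=\gamma(k')$, hence $k=k'$), so $\Phi$ is, after relabelling, the support of a partial-permutation pattern in every pair of coordinates, and balancedness says in addition that every index of $I$ (resp.\ $J$, $K$) lies in exactly $|\Phi|/|I|$ (resp.\ $|\Phi|/|J|$, $|\Phi|/|K|$) triples of $\Phi$.

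The heart of the construction is to chop the picture into $O_b(1)$ ``layers''. I would pick a generic rational linear functional $\ell$ on $\RR^r$ so that $f=\ell\circ\alpha$, $g=\ell\circ\beta$, $h=\ell\circ\gamma$ are injective and still satisfy $f(i)+g(j)+h(k)=0$ on $\Phi$; since the coordinates are bounded by $b$, the value ranges of $f,g,h$ sit in a bounded interval. Choose a random shift and cut these ranges into consecutive blocks of a fixed length, the three cuts coordinated so that each admissible block-triple---an $f$-block, a $g$-block and an $h$-block whose labels are compatible with summing to $0$---is matched up. Restricted to a single matched block-triple, the relation $f+g+h=0$ together with the injective pair projections forces the surviving pattern to be nearly a diagonal, and a Hall--K\"onig matching argument on its $I$--$J$ projection (degrees controlled by the balancedness numbers, hence ultimately only by $b$ after bookkeeping how the layers split the degrees) extracts from the largest matched block-triple an honest diagonal $\Delta$ of size $\ge C_b\cdot\min\{|I|,|J|,|K|\}$, with $C_b$ depending on $b$ alone because both the number of layers and the matching loss are dictated solely by the coordinate bound. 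Finally I would read off the weight functions $a\colon I\to\ZZ$, $b\colon J\to\ZZ$, $c\colon K\to\ZZ$ certifying $\Delta\trianglelefteq\Phi$ from the block indices---value $0$ inside the chosen block-triple, large outside---and add a small multiple of $f,g,h$, exploiting the $\ZZ^r$-structure, to break the remaining ties on $\Phi$ inside the block-triple so that every triple not in $\Delta$ receives a strictly positive value; clearing denominators makes $a,b,c$ integral.

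I expect the genuine obstacle to be this last step carried out with an $r$-free constant: one must ensure that a fraction of $\min\{|I|,|J|,|K|\}$ depending on $b$ alone survives as an \emph{honest} diagonal whose complement in $\Phi$ can be forced strictly positive by single-coordinate weights---roughly, that $\Phi$ has no Ruzsa--Szemer\'edi-type obstruction to a large near-induced diagonal. This is exactly the place where boundedness of the coordinates by $b$ is indispensable: a Behrend/Ruzsa--Szemer\'edi pattern with only subpolynomially small induced matchings is itself tight, but only with coordinates of size comparable to $|I|$, so $b$-tightness with $b$ fixed excludes it. Turning ``bounded coordinates'' into an explicit $C_b$---through the layering and an averaging argument over the random shift---is the technical core.
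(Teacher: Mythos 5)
You are trying to prove a statement that the paper itself does not prove: Theorem~\ref{thm:tight} is quoted from B\"urgisser--Clausen--Shokrollahi (15.39), attributed to Strassen, and is used as a black box in the appendix. So your argument has to stand on its own, and as written it does not: the quantitative skeleton rests on a false premise. You project the $\ZZ^r$-valued weights by a generic linear functional $\ell$ and claim that, since all coordinates lie in $\{-b,\dots,b\}$, the ranges of $f=\ell\circ\alpha$, $g=\ell\circ\beta$, $h=\ell\circ\gamma$ lie in an interval bounded in terms of $b$, so that cutting into blocks of fixed length gives $O_b(1)$ coordinated layers. But $r$ is not bounded by anything depending on $b$: injectivity of $\alpha$ forces $(2b+1)^r\ge |I|$, and an injective $f$ must take at least $|I|$ distinct values, so its range cannot be controlled by $b$ alone. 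Consequently either the number of blocks grows with $|I|$, or the blocks are so long that the within-block pattern is nothing like a diagonal; and with integer weights a block of length $O_b(1)$ meets only $O_b(1)$ elements of $I$, so ``the largest matched block-triple'' can never yield a diagonal of size $C_b\cdot\min\{|I|,|J|,|K|\}$. If instead you take the union over many block-triples, the block labels themselves satisfy an (approximate) sum-to-zero relation, so you must again select a large \emph{induced} diagonal of block-triples --- you have reproduced the original problem one level up, and the recursion never terminates.

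Two further steps fail concretely. First, the proposed degeneration certificate (weights read off from block indices, zero inside the chosen block-triple, plus ``a small multiple of $f,g,h$ to break ties'') cannot work: for a triple of $\Phi\setminus\Delta$ lying inside the chosen block-triple the block weights contribute $0$ and $f(i)+g(j)+h(k)=0$ identically on $\Phi$, so no such perturbation makes the value strictly positive while keeping it zero on $\Delta$. The certificates that do work are of keep/discard type (weights vanish on retained elements of $I,J,K$ and are huge on discarded ones), and they require $\Delta$ to be the \emph{induced} pattern on the retained elements --- exactly the Ruzsa--Szemer\'edi-type difficulty you flag. Second, your one-sentence reason why $b$-tightness excludes such obstructions (Behrend patterns need coordinates of size $\sim|I|$) is an assertion, not an argument, and it is precisely the ``technical core'' you explicitly leave open. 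So the heart of the theorem --- producing an induced diagonal of size $\Omega_b(\min\{|I|,|J|,|K|\})$ with an $r$-free constant and certifying it as a combinatorial degeneration --- is not established; Strassen's proof in the cited book handles the unbounded number of coordinates by a genuinely different selection-and-pruning mechanism rather than a one-dimensional generic projection with boundedly many layers, and I would direct you there.
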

The entropy enters in the proof of \Cref{thm:laserMethod2} through the following lemma, which is an easy consequence of Stirling's formula.
\begin{lemma}[{See \cite[(15.40)]{BurgisserBook}}] \label{lem:entropy}
	Fix a finite set $S$.
	There exists a sequence $\rho_N$, with $\lim_{N\to \infty}{\rho_N}=0$, such that for every rational probability distribution $P$ on $S$ that can be written as $P(i)=Q(i)/N$ for some $Q: S \to \NN$, it holds that
	\[
	\left| \frac{1}{N} \log{\binom{N}{Q}} - H(P) \right| \leq \rho_N.
	\]
	Here $\log$ is the logarithm in base $e$, and $\binom{N}{Q}$ stands for the appropriate multinomial coefficient. Explicitly: if $S$ is the set $\{1,2,\ldots, n\}$, then we have $\binom{N}{Q} = \binom{N}{Q(1), \ldots ,Q(n)}$.
\end{lemma}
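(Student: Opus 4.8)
The plan is to expand the multinomial coefficient via Stirling's formula and to notice that, once the linear terms cancel, what remains is exactly $N$ times the entropy plus an error term of order $\log N$ which is uniform in $Q$. Concretely, writing $n := |S|$ and identifying $S$ with $\{1,\dots,n\}$, set $q_i := Q(i)$, so that $\sum_{i=1}^n q_i = N$ and $\binom{N}{Q} = N!\big/\prod_{i=1}^n q_i!$. The first step is to recall the elementary two-sided Stirling estimate: there is an absolute constant $c$ such that $m\log m - m \le \log m! \le m\log m - m + \log m + c$ for every integer $m \ge 1$, while $\log 0! = 0$.

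Applying this bound to $N!$ and to each $q_i!$ with $q_i \ge 1$, and using the convention $0\log 0 = 0$ to absorb the indices with $q_i = 0$, I would obtain
\[
\log\binom{N}{Q} \;=\; \Bigl(N\log N - \sum_{i=1}^n q_i\log q_i\Bigr) \;-\; \Bigl(N - \sum_{i=1}^n q_i\Bigr) \;+\; R_N(Q),
\]
where $|R_N(Q)| \le (n+1)(\log N + c)$, because there are at most $n$ nonzero $q_i$, each satisfying $q_i \le N$. The middle bracket vanishes since $\sum_i q_i = N$. For the first bracket, observe that $H(P) = -\sum_i \tfrac{q_i}{N}\log\tfrac{q_i}{N} = \log N - \tfrac1N\sum_i q_i\log q_i$, so it equals $N\cdot H(P)$. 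Dividing through by $N$ then yields
\[
\Bigl|\tfrac1N\log\binom{N}{Q} - H(P)\Bigr| \;=\; \tfrac1N\,|R_N(Q)| \;\le\; \frac{(n+1)(\log N + c)}{N}.
\]

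It then suffices to set $\rho_N := (n+1)(\log N + c)/N$: this is a legitimate choice of a single sequence, since $n = |S|$ is fixed, the bound holds uniformly over all rational distributions $P$ of the required form, and $\rho_N \to 0$ as $N \to \infty$, which is precisely the assertion. I do not expect any serious obstacle; the only two points that need a moment's care are keeping the error bound uniform in $Q$ (for which one uses only $q_i \le N$ and $\#\{i : q_i \ne 0\} \le n$) and treating the coordinates with $q_i = 0$ consistently via $0\log 0 = 0$ on both sides of the comparison.
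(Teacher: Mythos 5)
Your argument is correct and is exactly the route the paper has in mind: the lemma is stated as an easy consequence of Stirling's formula (citing B\"urgisser et al.\ (15.40)), and your two-sided Stirling estimate, with the $0\log 0=0$ convention and the uniform error bound $\rho_N=(n+1)(\log N+c)/N$ coming from $q_i\le N$ and at most $n$ nonzero parts, fills in precisely that computation. No gaps; the uniformity in $Q$ for fixed $|S|=n$ is handled correctly, which is the only delicate point.
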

The symmetric group $\SG_3$ acts on $U \ot V \ot W$ by permuting the factors. For all $\sigma \in \SG_3$, the tensor $\sigma T \in  \sigma U \ot \sigma V \ot \sigma W$ has a blocking $\sigma D$, with components $(\sigma T)_{(\sigma(i), \sigma(j), \sigma(k))} = \sigma(T_{(i,j,k)})$.
We fix a subgroup $G \subseteq \SG_3$. 
For any tensor $T$, we denote its \emph{symmetrization} $\bigbot_{\sigma \in G}{\sigma T}$ by $\tilde{T}$.
Note that $V_{\omega}(\tilde{T}) \geq (V_{\omega}(T))^{|G|}$, by supermultiplicativity.

\begin{thm} \label{thm:laserMethod2}
	Let $T \in U \otimes V \otimes W$, and let $D$ be a blocking of $T$, indexed by $I \times J \times K$. Assume that $\supp_DT$ is tight and reconstructible. 
Let $P$ be any probability distribution on $\supp_DT$, let $G \subseteq \SG_3$ be a subgroup and let $\tilde{T}$ be the symmetrization of $T$ with respect to $G$.
Then the following inequality holds:
	\begin{equation} \label{eq:laser2}
	\log{V_{\omega}(\tilde{T})} \geq \min_{L \in \{I,J,K\}}{\sum_{\sigma \in G}{H(P_{\sigma L})}}+\sum_{\supp_DT}{P(i,j,k)\log{V_\omega(\widetilde{T_{(i,j,k)}})}}. 
	\end{equation}
\end{thm}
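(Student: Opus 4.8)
The plan is to follow the blueprint of \cite[Theorem 15.41]{BurgisserBook} and \cite[Theorem 4.1]{LeGall}, adapting it to symmetrization with respect to an arbitrary subgroup $G \subseteq \SG_3$ and exploiting reconstructibility to drop the error term. First I would reduce to the case where $P$ is a rational probability distribution: since both sides of \eqref{eq:laser2} are continuous in $P$ and the rational distributions on the finite set $\supp_DT$ are dense, it suffices to prove the inequality for $P = Q/N$ with $Q\colon \supp_DT \to \NN$, and then pass to the limit. Fix such an $N$; the strategy is to show that a suitable large Kronecker power $\tilde{T}^{\boxot N}$ degenerates to a direct sum of Kronecker products of the blocks $\widetilde{T_{(i,j,k)}}$, with the number of summands controlled by a multinomial coefficient, and then take $N$-th roots and let $N \to \infty$.

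The core combinatorial step is as follows. Consider $T^{\boxot N}$; its natural blocking is indexed by $(\supp_DT)^N$, and the "$Q$-isotypic" part — the sum of those blocks $T_{(i_1,j_1,k_1)} \boxot \cdots \boxot T_{(i_N,j_N,k_N)}$ whose type vector (the function counting how many times each element of $\supp_DT$ occurs) equals $Q$ — is, after collecting, a direct sum of $\binom{N}{Q}$ copies of $\boxbot_{(i,j,k)} \widetilde{T_{(i,j,k)}}^{\boxot Q(i,j,k)}$... but this is not literally a direct sum inside $T^{\boxot N}$: the blocks overlap in the ambient space. This is where tightness and \Cref{thm:tight} enter. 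The index set of the $Q$-part, viewed inside $I^N \times J^N \times K^N$, is a tight (in fact $b$-tight for some $b$ depending only on $D$) and — after restricting to the subset where the three marginal type vectors also take prescribed values, which is where the $\min_{L}\sum_\sigma H(P_{\sigma L})$ comes from — balanced subset. By \Cref{thm:tight} it contains a diagonal of size at least $C_b \cdot \min\{\ldots\}$ which is a combinatorial degeneration, so by \Cref{prop: degen} the relevant power of $\tilde{T}$ degenerates to a direct sum of that many copies of the product of symmetrized blocks. Applying $V_\omega$, using superadditivity and supermultiplicativity (Remark after \Cref{thm:asymsum}) together with \Cref{thm:asymsum} on the other side, taking logarithms, dividing by $N$, invoking \Cref{lem:entropy} to identify $\frac1N\log\binom{N}{Q}$ with the entropy terms, and letting $N \to \infty$ yields \eqref{eq:laser2}.

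Two points need care. First, the symmetrization: when we form $\tilde{T} = \boxbot_{\sigma \in G} \sigma T$ and take its $N$-th Kronecker power, the blocking is indexed by a product over $\sigma \in G$ of copies of $\supp_DT$ (with permuted roles of $I,J,K$); selecting a single type $Q$ for the "untwisted" copy forces, via the $G$-symmetry, a compatible structure on all copies, and the marginal-counting that produces the balancedness hypothesis must be done on each of the three legs of the symmetrized tensor. This is exactly why the bound features $\min_{L \in \{I,J,K\}} \sum_{\sigma \in G} H(P_{\sigma L})$ rather than a single entropy: each leg of $\tilde{T}$ is a tensor product over $\sigma$ of legs of $\sigma T$, and we must take the worst leg. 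Second, reconstructibility: in the general argument of \cite{LeGall} one cannot freely prescribe the marginal types independently of $Q$, and a correction term $-\Gamma_S(P)$ appears measuring the failure; \Cref{def:reconstructible} says precisely that $P$ is the unique distribution on $\supp_DT$ with its marginals, so the fiber of "distributions with the given marginals" is a single point, the correction term vanishes, and the balanced subset we extract has full size. The main obstacle I expect is bookkeeping: carefully setting up the index sets for $\tilde{T}^{\boxot N}$, verifying $b$-tightness and balancedness of the selected subset with the reconstructibility input, and matching the multinomial coefficients to the entropy expression in \eqref{eq:laser2} — the conceptual ingredients (Theorems~\ref{thm:tight}, \ref{prop: degen} and Lemma~\ref{lem:entropy}) are all in hand, but threading the $G$-symmetrization through them without sign or indexing errors is the delicate part.
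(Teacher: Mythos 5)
Your proposal follows essentially the same route as the paper's proof: rational approximation of $P$, type classes inside $\tilde{T}^{\boxot N}$, $b$-tightness plus \Cref{thm:tight} to extract a large diagonal, \Cref{prop: degen} to degenerate to a direct sum of Kronecker products of the symmetrized blocks, and superadditivity/supermultiplicativity of $V_\omega$ combined with \Cref{lem:entropy} in the limit $N\to\infty$. Your reading of the two delicate points also matches the paper: reconstructibility guarantees that prescribing the three marginal types forces the joint type to equal $Q$, so every block of the selected set has the same value and no $\Gamma_S(P)$ correction is needed, while counting the marginal type classes on each leg of the $G$-symmetrized tensor produces exactly the $\min_{L}\sum_{\sigma\in G}H(P_{\sigma L})$ term.
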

\Cref{thm:laserMethod} is the special case of the above theorem with $G=\ZZ/{3\ZZ}$.
\begin{proof}
	Assume that $D$ is $b$-tight.
	We assume that $P$ is a rational probablity distribution, i.e.\ $P(i,j,k)\in \QQ$ for all $(i,j,k)$. Since a general probability distribution can be approximated by rational ones, this suffices (see also the proof of \cite[(15,41)]{BurgisserBook}). 
	There is a map $Q: \supp_DT \to \NN$ and an $N \in \NN$ such that $P(i,j,k)=Q(i,j,k)/N$ for all $(i,j,k)$.
	Let $I_Q \subseteq I^N$ consist of all sequences in which the element $i$ appears exactly $Q_I(i):=N\cdot P_I(i)=\sum_{j,k}{Q(i,j,k)}$ times for all $i$. Note that $|I_Q|=\binom{N}{Q_I}$.
	We define $J_Q \subseteq J^N$ and $K_Q \subseteq K^N$ analogously. 
	
	
	Write $\tilde{T} := \bigbot_{\sigma \in G}(\sigma T)$, and consider the tensor 
	\[
	{\tilde{T}^{\boxot N}} = \bigbot_{\sigma \in G}(\sigma T)^{\boxot N} \in \Big(\bigbot_{\sigma \in G}(\sigma U)^{\boxot N}\Big) \ot \Big(\bigbot_{\sigma \in G}(\sigma V)^{\boxot N}\Big) \ot \Big(\bigbot_{\sigma \in G}(\sigma W)^{\boxot N}\Big).
	\]
	Now $\tilde{T}$ has a blocking $\tilde{D} := \bigbot_{\sigma \in G}(\sigma D)^{\boxot N}$, with support 
	\[
	\supp_{\tilde{D}}{{\tilde{T}^{\boxot N}}} = \prod_{\sigma \in G}{(\supp_{\sigma D}{\sigma T})^N}
	\subseteq (\prod_{\sigma \in G}{(\sigma I)^N}) \times (\prod_{\sigma \in G}{(\sigma J)^N}) \times (\prod_{\sigma \in G}{(\sigma K)^N}),
	\]
	which is again $b$-tight. 
	We define 
	\[
	\Phi := \left((\prod_{\sigma \in G}{\sigma I_Q}) \times (\prod_{\sigma \in G}{\sigma J_Q}) \times (\prod_{\sigma \in G}{\sigma K_Q})\right) \cap \supp_{\tilde{D}}{{\tilde{T}^{\boxot N}}}.
	\]
	It trivially holds that $\Phi \trianglelefteq \supp_{\tilde{D}}{{\tilde{T}^{\boxot N}}}$.\\
	Let $(x,y,z) \in (\prod_{\sigma \in G}{(\sigma I)^N}) \times (\prod_{\sigma \in G}{(\sigma J)^N}) \times (\prod_{\sigma \in G}{(\sigma K)^N})$, and write 
	\[
	(x,y,z) = \Big((i_{\sigma, \ell})_{\sigma \in G, 1 \leq \ell \leq N},(j_{\sigma, \ell})_{\sigma \in G, 1 \leq \ell \leq N},(k_{\sigma, \ell})_{\sigma \in G, 1 \leq \ell \leq N} \Big).
	\]
	From our reconstructibility assumption, it follows that $(x,y,z) \in \Phi$ if and only if for every $\sigma \in G$ and $(\sigma(i),\sigma(j),\sigma(k)) \in \supp_{\sigma D}{\sigma T}$, there are exactly $Q(i,j,k)$ indices $\ell$ for which $(i_{\sigma,\ell}, j_{\sigma, \ell}, k_{\sigma, \ell}) = (\sigma(i),\sigma(j),\sigma(k))$.
	We find
	\begin{multline*}
	{\tilde{T}^{\boxot N}}_{(x,y,z)} = \bigbot_{\sigma, \ell}{(\sigma T)_{(i_{\sigma, \ell},j_{\sigma, \ell},k_{\sigma, \ell})}}
	= \bigbot_{\scriptsize \begin{matrix} (i,j,k) \in \supp_DT \\ \sigma \in G \end{matrix}}{\sigma (T_{(i,j,k)})}^{Q(i,j,k)}\\
	=\bigbot_{(i,j,k) \in \supp_DT}{(\widetilde{T_{(i,j,k)}})^{Q(i,j,k)}},
	\end{multline*}
	hence by supermultiplicativity:
	\[
	V_{\omega}({\tilde{T}^{\boxot N}}_{(x,y,z)}) \geq \prod_{(i,j,k) \in \supp_DT}{V_{\omega}(\widetilde{T_{(i,j,k)}})^{Q(i,j,k)}}.
	\]
	
	We now apply Theorem \ref{thm:tight} to the balanced $b$-tight subset $\Phi \subseteq (\prod_{\sigma }{\sigma I_Q}) \times (\prod_{\sigma }{\sigma J_Q}) \times (\prod_{\sigma }{\sigma K_Q})$, and we find a diagonal $\Delta \trianglelefteq \Phi$, such that $|\Delta| \geq C_b \cdot \min_{L \in \{I,J,K\}}{\prod_\sigma{|\sigma L_Q|}}$.
	Since $\Phi \trianglelefteq \supp_{\tilde{D}}{{\tilde{T}^{\boxot N}}}$, we find $\Delta \trianglelefteq \supp_{\tilde{D}}{{\tilde{T}^{\boxot N}}}$. So by applying \Cref{prop: degen}, we find
	\[
	\bigoplus_{(x,y,z) \in \Delta}{{\tilde{T}^{\boxot N}}_{(x,y,z)}} \trianglelefteq {\tilde{T}^{\boxot N}}.
	\]
	So we find that
	\[
	V_{\omega}(\tilde{T}) \geq \Big( \sum_{(x,y,z)\in \Delta}{V_{\omega}({\tilde{T}^{\boxot N}}_{(x,y,z)})} \Big)^{\frac{1}{N}}\\
	\geq \Big( |\Delta|{ \prod_{(i,j,k) \in \supp_DT}{V_{\omega}(\widetilde{T_{(i,j,k)}})^{Q(i,j,k)}}} \Big)^{\frac{1}{N}} .
	\]
	By taking logarithms, we obtain 
	\[
	\log{V_{\omega}(\tilde{T})} \geq \frac{1}{N} \min_{L \in \{I,J,K\}} \log{\Big(C_b\cdot \prod_{\sigma}{\binom{N}{Q_{\sigma L}}}\Big)} + \sum_{\supp_DT}{P(i,j,k)\log{\Big(V_{\omega}(\widetilde{T_{(i,j,k)}})\Big)}}.
	\]
	Now the theorem follows by taking $N \to \infty$  and applying \Cref{lem:entropy}.
\end{proof}

\end{document}